\documentclass[12pt,reqno]{amsart}
\usepackage{amsmath, amsthm, amssymb, cite, bm, eufrak,enumitem,bbm,color}

\topmargin 0cm
\advance \topmargin by -\headheight
\advance \topmargin by -\headsep
     
\setlength{\paperheight}{270mm}%
\setlength{\paperwidth}{192mm}%
\textheight 22.5cm
\oddsidemargin 0cm
\evensidemargin \oddsidemargin
\marginparwidth 1.25cm
\textwidth 14cm
\setlength{\parskip}{0.05cm}

\newtheorem{theorem}{Theorem}[section]
\newtheorem{lemma}[theorem]{Lemma}
\newtheorem{proposition}[theorem]{Proposition}
\newtheorem{corollary}[theorem]{Corollary}

\theoremstyle{definition}

\theoremstyle{remark}

\numberwithin{equation}{section}

\newcommand{\abs}[1]{\lvert#1\rvert}
\newcommand{\mmod}[1]{\,\,(\text{mod}\,\,#1)}

\title{Efficient congruencing in ellipsephic sets:\\the general case}
\author{Kirsti D. Biggs}
\address{Mathematical Sciences, University of Gothenburg and Chalmers University of Technology, 412 96 Gothenburg, Sweden}
\email{biggs@chalmers.se}
\subjclass[2010]{{11A63, 11D45, 11L07, 11P55}}
\keywords{\hspace{-0.01in}Hardy--Littlewood\hspace{-0.01in} method, \hspace{-0.005in}efficient\hspace{-0.01in} congruencing,\hspace{-0.005in} missing\hspace{-0.01in} digits}
\thanks{This paper is based on work appearing in the author's PhD thesis \cite{mythesis} at the University of Bristol, UK, and supported by EPSRC Doctoral Training Partnership EP/M507994/1}

\begin{document}

\begin{abstract}
In this paper, we bound the number of solutions to a general Vinogradov system of equations
\begin{equation*}
x_1^j+\dots+x_s^j=y_1^j+\dots+y_s^j,\quad (1\leq j\leq k),
\end{equation*}
as well as other related systems, in which the variables are required to satisfy digital restrictions in a given base. Specifically, our sets of permitted digits have the property that there are few representations of a natural number as sums of elements of the digit set---the set of squares serving as a key example. We obtain better bounds using this additive structure than could be deduced purely from the size of the set of variables. In particular, when the digits are required to be squares, we obtain diagonal behaviour with $2k(k+1)$ variables.
\end{abstract}

\maketitle

\section{Introduction}
We consider, for a fixed integer $k\in\mathbb{N}$, the system of Diophantine equations
\begin{equation}\label{VMVT}
x_1^j+\dots+x_s^j=y_1^j+\dots+y_s^j,\quad (1\leq j\leq k).
\end{equation}
In \cite{ellipsephic2}, the author proved an upper bound, in the case $k=2$, for the number of solutions to (\ref{VMVT}), with $1\leq x_i,y_i\leq X$ for all $i$, where the variables are restricted to subsets of the natural numbers defined by digital restrictions. In this paper, we extend such results to the case of general $k$.

Fix an odd prime $p>k$, and a subset $A\subset\mathbb{N}_0=\mathbb{N}\cup\{0\}$ with the property that 
\begin{equation}\label{keyAP}
\#\{(a_1,\dots,a_t)\in A^t\mid a_1+\dots+a_t=n\}\ll n^{\epsilon}
\end{equation}
for some $t\geq 2$ and for all $\epsilon>0$, 
and let
\begin{equation*}
\mathcal{E}=\mathcal{E}_p^{A}=\{n\in\mathbb{N}\mid \textstyle{n=\sum_i a_ip^i}, a_i\in A\cap[0,p-1]\,\,\mbox{for all }i\}
\end{equation*}
be the set of natural numbers whose expansion in base $p$ includes only digits from $A$. Write $A_p$ for $A\cap[0,p-1]$, and assume that $2\leq \#A_p\leq p-1$. Let $I_{s,k}(X)$ be the number of solutions to the Vinogradov system (\ref{VMVT}) with $x_i,y_i\in\mathcal{E}(X)=\mathcal{E}\cap[1,X]$ for all $i$, and write $Y$ for $\#\mathcal{E}(X)$. 
\begin{theorem}\label{basicthm}
For all $\epsilon>0$ we have
\begin{equation*}
I_{s,k}(X)\ll X^{\epsilon}(Y^s+Y^{2s-tk(k+1)/2}).
\end{equation*}
\end{theorem}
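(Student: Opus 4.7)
My plan is to adapt Wooley's nested efficient congruencing to the ellipsephic setting, following the strategy the author used in the quadratic case $k=2$ in \cite{ellipsephic2}. Writing
$$f(\boldsymbol{\alpha}; X) = \sum_{x \in \mathcal{E}(X)} e(\alpha_1 x + \dots + \alpha_k x^k),$$
orthogonality gives $I_{s,k}(X) = \int_{[0,1)^k} |f(\boldsymbol{\alpha}; X)|^{2s}\, d\boldsymbol{\alpha}$, and the bound $I_{s,k}(X) \ll Y^s$ is immediate from the diagonal contribution (together with Hölder's inequality in the small-$s$ range). The substantive task is therefore to establish the off-diagonal bound $I_{s,k}(X) \ll X^\epsilon Y^{2s - tk(k+1)/2}$.

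I would introduce conditioned exponential sums $\mathfrak{f}_h(\boldsymbol{\alpha}; \xi)$, defined as the sum of $e(\alpha_1 x + \dots + \alpha_k x^k)$ over $x \in \mathcal{E}(X)$ satisfying $x \equiv \xi \mmod{p^h}$, and form mean values $K_{a,b}$ involving $|\mathfrak{f}_a|^{2u}|\mathfrak{f}_b|^{2v}$ with a chosen split $u+v=s$, as is standard in the nested framework. Two familiar ingredients drive the iteration: translation--dilation invariance of (\ref{VMVT}), which normalises residue classes and allows $\xi$ to be replaced by $0$; and the Vinogradov congruencing lemma, which upgrades a common residue class mod $p^a$ shared by $k$ solutions to (\ref{VMVT}) into congruences on all elementary symmetric functions mod $p^{ka}$, producing the characteristic scale jump $a \mapsto ka$.

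The genuinely new ingredient is hypothesis (\ref{keyAP}), which I would use to bound the $2t$-th moments of $\mathfrak{f}_a$ that arise after a Hölder step in the iteration. Such a moment counts pairs of $t$-tuples in $\mathcal{E}(X)^t$ agreeing in all power sums up to degree $k$; expanding digit-by-digit, each digit position contributes a count of additive representations by $t$ elements of $A_p$, so (\ref{keyAP}) yields a saving of $(\#A_p)^{-tkh(k+1)/2}$ in passing from scale $a$ to scale $a+h$, replacing the classical gain $(\#A_p)^{-kh(k+1)/2}$ available when $t=1$. Translated into exponents of $Y \asymp (\#A_p)^{\log_p X}$, this produces the desired improvement from $k(k+1)/2$ to $tk(k+1)/2$, while the ellipsephic density estimate $\#\{x \in \mathcal{E}(X) : x \equiv \xi \mmod{p^h}\} \ll Y/(\#A_p)^h$ controls the routine losses at each stage.

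One would then iterate the resulting recursion along the scales $p, p^k, p^{k^2}, \dots$ up to depth of order $\log_k \log_p X$, at which point the conditioning depth saturates and any putative admissible exponent $\lambda_s > 2s - tk(k+1)/2$ in $I_{s,k}(X) \ll X^\epsilon Y^{\lambda_s}$ can be improved, yielding the theorem in the limit. The main obstacle is calibrating the Hölder parameters so that (\ref{keyAP}) is triggered with exactly $t$ variables at each congruencing step: the $t$ of the hypothesis must be matched against the splitting $u+v=s$ and against the exponents produced by the Vinogradov congruencing lemma, and tracking this bookkeeping through the iteration—while simultaneously maintaining translation invariance and the correct ellipsephic density in residue classes—is where the technical delicacy lies.
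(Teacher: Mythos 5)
Your broad strategy---adapt Wooley's nested efficient congruencing with hypothesis~(\ref{keyAP}) supplying the new analytic input---is the same as the paper's, which proves the weighted general form (Theorem~\ref{Thm1.1}) and obtains Theorem~\ref{basicthm} by specialising $\phi_j(z)=z^j$ and $\mathfrak{a}_x=1$. However, there is a genuine gap in where and how you deploy~(\ref{keyAP}), and as sketched the account would not assemble into a proof.

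You describe~(\ref{keyAP}) as controlling ``$2t$-th moments of $\mathfrak{f}_a$ \ldots{} agreeing in all power sums up to degree $k$,'' applied digit-by-digit, giving a saving $(\#A_p)^{-tkh(k+1)/2}$. This misplaces the hypothesis. The digit-by-digit expansion only interacts cleanly with the \emph{linear} equation $\sum_i x_i\equiv\sum_i y_i\pmod{p^d}$: the higher power sums $\sum x_i^j$ for $j\geq 2$ produce cross-terms between digits and do not decompose, so the $E_t^*$ property cannot be applied to them in the way you describe. Moreover, when $t\leq k$, agreement of all power sums up to degree $k$ on $t$-tuples already forces multiset equality by Newton--Girard, so~(\ref{keyAP}) would be superfluous in exactly the scenario you invoke. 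In the paper the whole point of using the \emph{nested} variant is that the induction on $k$ bottoms out at a degree-one base case (Proposition~\ref{Lemma5.1}, via Lemma~\ref{diaglemma}): there one has a single linear congruence, one sets $s=t$, and the $E_t^*$-property of $A_p$ yields $\lambda(t,1)=0$---essentially diagonal behaviour for the conditioned $2t$-th moment of a degree-one sum. The factor $k(k+1)/2$ then emerges from the iterative machinery (with auxiliary exponents $R=tr(r+1)/2$ for $1\leq r\leq k-1$ and repeated appeal to the inductive hypothesis through Corollary~\ref{Cor3.2}), not from a one-shot gain of $(\#A_p)^{-tkh(k+1)/2}$ in a single moment estimate. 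You also omit the reduction to the congruence mean values $U_{s,k}^{B}$, $U_{s,k}^{B,H}$ and the extremal quantity $\lambda(s,k)$, which is the rigorous framework in which ``saturating the conditioning depth'' and deriving the contradiction from $\Lambda>0$ are actually carried out. Your proposal therefore identifies the right ambient method and correctly recognises that the digit-set additive structure is the new input, but it misidentifies the single place (degree one) where that input can be used, and the quantitative bookkeeping does not match what is needed.
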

The implicit constant may depend on the various parameters $s,k,t,p$ and $\epsilon$, but not on $X$. We note that this bound is essentially optimal provided that $Y\gg X^{1/t}$, since one may apply a standard method, discussed later in this section, to see that
\begin{align}\label{lowerbd}
I_{s,k}(X)&\gg Y^{2s}X^{-k(k+1)/2}\gg Y^{2s-tk(k+1)/2},
\end{align}
by our assumption on the size of $Y$, and the bound $I_{s,k}(X)\gg Y^s$ comes from the diagonal solutions.

For historical reasons, upper bounds for the number of solutions to (\ref{VMVT}) go by the name of Vinogradov's mean value theorem---in \cite{wooleyk3}, Wooley used the efficient congruencing method to prove an optimal upper bound for the number of solutions to this system in the case $k=3$, the first time such a bound had been obtained for any $k>2$. In \cite{BDG}, Bourgain, Demeter and Guth proved the equivalent statement for $k\geq 4$ using the harmonic analytic technique of $l^2$-decoupling, often seen as a real analogue of the $p$-adic efficient congruencing. Subsequently, Wooley developed the nested version of his method and used it to provide an alternative proof of the general case in \cite{NEC}. The similarities between the two methods are analysed further in \cite{ecvl2}.

As discussed in \cite{ellipsephic2}, we call our sets with digital restrictions ellipsephic, after the French term \emph{ellips\'ephique}, coined by Mauduit to refer to integers with missing digits, and used, for example, in \cite{Aloui} and \cite{AlouiMMk}. We let $r=\#A_p$, and note that the restriction that $2\leq r\leq p-1$ stems from the fact that the cases $r=0$ and $A_p=\{0\}$ are trivial, and the case $r=p$ reduces to the classical case, while the case $r=1$ (with $A_p\neq\{0\}$) is sufficiently unusual that we omit it from consideration---the number of such `repdigits' is $O(\log{X})$, which makes them much sparser than the ellipsephic sets we consider here. We observe that 
\begin{align*}
\#\mathcal{E}(X)\ll r^{\log_p{X}+1}=rX^{\log_p{r}},
\end{align*}
and hence that $\mathcal{E}$ is a thin set, in the sense that
\begin{equation*}
\lim_{X\to\infty}\frac{\#\mathcal{E}(X)}{X}=0.
\end{equation*}
The effect of these digital restrictions is to give ellipsephic sets a fractal-like structure similar to those seen in the middle-third Cantor set and generalisations thereof. In \cite{LabaPram}, \L aba and Pramanik study maximal operators corresponding to certain real fractal subsets constructed in a similar manner.

We recall the details of the key additive property that we require of our digit set. For an integer $t\geq 2$, we refer to $A\subset\mathbb{N}_0$ as an $E_t^*$-set if (\ref{keyAP}) holds for all $\epsilon>0$. As mentioned in \cite{ellipsephic2}, we can view such sets as a generalisation of Sidon sets, in which the number of representations of an integer as the sum of a fixed number of elements of our set is bounded by a constant. 

Landau proved in \cite{Landau1912} that the set of squares is an $E_2^*$-set, and Hardy and Littlewood conjectured in \cite[Hypothesis K]{PNVI} that for all $k\geq 2$, the set of $k$th powers should be an $E_k^*$-set. However, in \cite{mahlerHypK} Mahler proved that this conjecture is false for the set of cubes, and it remains open to date for $k\geq 4$. Nevertheless, in \cite{Vurandom}, Vu used a probabilistic argument to demonstrate that for any $k\geq 2$, there exists a subset $S_k$ of the set of $k$th powers and an integer $t_k$ such that $S_k$ is an $E_{t_k}^*$-set, thus proving the existence of infinitely many sets of the form we are interested in. The explicit determination of further sets of this form would be of great interest.

We refer to $\mathcal{E}=\mathcal{E}^A_p$ as a $(p,t)^*$-ellipsephic set if $A$ is an $E_t^*$-set, and introduce some further notation to allow us to state the more general form of our main result. Consider a system of polynomials $\bm{\phi}\in\mathbb{Z}[z]^k$ with non-vanishing Wronskian
\begin{align*}
W(z,\bm{\phi})&=\det\big(\phi_j^{(i)}(z)\big)_{1\leq i,j\leq k},
\end{align*}
where $\phi_j^{(i)}(z)$ is the $i$th derivative of $\phi_j$ with respect to $z$. For a sequence $\bm{\mathfrak{a}}=(\mathfrak{a}_x)_{x\in\mathcal{E}}$ of complex weights, we let
\begin{equation*}
J_{s,k}(X)=J_{s,k}(X;\bm{\mathfrak{a}},\bm{\phi})=\oint \Big|\sum_{x\in\mathcal{E}(X)}\mathfrak{a}_x e\big(\alpha_1 \phi_1(x)+\dots+ \alpha_k \phi_k(x)\big)\Big|^{2s}\,d\bm{\alpha},
\end{equation*}
where we write $e(z)$ for $e^{2\pi iz}$ and $\oint$ for the integral over the $k$-dimensional unit cube $[0,1]^k$. Then $J_{s,k}(X)$ counts the solutions $x_i,y_i\in\mathcal{E}(X)$ to the system
\begin{equation*}
\phi_j(x_1)+\dots+\phi_j(x_s)=\phi_j(y_1)+\dots+\phi_j(y_s),\quad (1\leq j\leq k),
\end{equation*}
with weights $\mathfrak{a}_{\bm{x}}\overline{\mathfrak{a}_{\bm{y}}}=\mathfrak{a}_{x_1}\dots\mathfrak{a}_{x_s}\overline{\mathfrak{a}_{y_1}\dots\mathfrak{a}_{y_s}}$. We adopt the convention that, unless previously fixed, statements involving $\epsilon$ hold for any suitably small choice of $\epsilon>0$, and as such the exact value may change from line to line. Additionally, implicit constants in Vinogradov's $\ll$ notation may always depend on $\epsilon$. The vector notation $\bm{x}\equiv\xi\mmod{q}$ means that $x_i\equiv\xi\mmod{q}$ for all $i$, and $\bm{x}\equiv\bm{y}\mmod{q}$ means that $x_i\equiv y_i\mmod{q}$ for all $i$.

Our main theorem provides the following upper bound for $J_{s,k}(X)$.
\begin{theorem}\label{Thm1.1} 
For $k\in\mathbb{N}$, let $\bm{\phi}\in\mathbb{Z}[z]^k$ be a system of polynomials with $W(z,\bm{\phi})\neq 0$. For $t\in\mathbb{N}$ with $t\geq 2$, and for $p>k$ an odd prime sufficiently large in terms of the coefficients of $\bm{\phi}$, let $\mathcal{E}$ be a $(p,t)^*$-ellipsephic set, and write $Y=\#\mathcal{E}(X)$. Then for $s\geq tk(k+1)/2$, we have
\begin{equation*}
J_{s,k}(X)\ll Y^{s-tk(k+1)/2}X^{\epsilon}\bigg(\sum_{x\in\mathcal{E}(X)}\left|\mathfrak{a}_x \right|^2\bigg)^s.
\end{equation*}
In the case where $\bm{\phi}$ is the Vinogradov system, it suffices to take $p>k$ odd.
\end{theorem}
An application of H\"older's inequality shows that for $s\leq tk(k+1)/2$, we have
\begin{align*}
J_{s,k}(X)&\ll 1\cdot J_{tk(k+1)/2,k}(X)^{2s/tk(k+1)}\\
& \ll X^{\epsilon}\bigg(\sum_{x\in\mathcal{E}(X)}\left|\mathfrak{a}_x \right|^2\bigg)^s.
\end{align*}
On the other hand, if we take $\mathfrak{a}_x=0$ for $x\notin\mathcal{E}$ in the classical version of Vinogradov's mean value theorem, for $s=tk(k+1)/2$ we obtain
\begin{equation*}
J_{s,k}(X)\ll Y^{(t-1)k(k+1)/2}X^{\epsilon}\bigg(\sum_{x\in\mathcal{E}(X)}\left|\mathfrak{a}_x \right|^2\bigg)^s,
\end{equation*}
so we see that, as in the quadratic case, we have achieved a power saving in $Y$ by utilising the specific additive structure of our ellipsephic sets, rather than simply their density.

\begin{corollary}\label{Cor1.2}
Theorem \ref{basicthm} is true.
\end{corollary}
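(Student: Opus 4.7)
The plan is to deduce Theorem \ref{basicthm} by specialising Theorem \ref{Thm1.1} and then interpolating via H\"older's inequality to cover the range $s<tk(k+1)/2$. The system of monomials $\phi_j(z)=z^j$ for $1\leq j\leq k$ is trivially $p^c$-spaced for every $c\in\mathbb{N}$, so it is an admissible choice in Theorem \ref{Thm1.1}. On taking the uniform weights $\mathfrak{a}_x=1$ for all $x\in\mathcal{E}(X)$, the weighted count $J_{s,k}(X)$ becomes exactly $I_{s,k}(X)$, while $\sum_{x\in\mathcal{E}(X)}\abs{\mathfrak{a}_x}^2=Y$.

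Write $s_0=tk(k+1)/2$. For $s\geq s_0$, Theorem \ref{Thm1.1} immediately yields
\begin{equation*}
I_{s,k}(X)\ll Y^{s-s_0}X^{\epsilon}Y^s=Y^{2s-s_0}X^{\epsilon},
\end{equation*}
which is absorbed into the right-hand side of Theorem \ref{basicthm}.

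For $s<s_0$, I would write $I_{s,k}(X)=\oint\bigl|f(\bm{\alpha})\bigr|^{2s}\,d\bm{\alpha}$, where $f(\bm{\alpha})=\sum_{x\in\mathcal{E}(X)}e(\alpha_1 x+\dots+\alpha_k x^k)$, and invoke the monotonicity of $L^p$-norms on the probability space $[0,1]^k$ (equivalently, H\"older's inequality) to obtain $I_{s,k}(X)^{1/s}\leq I_{s_0,k}(X)^{1/s_0}$. Combining this with the $s=s_0$ case above gives $I_{s,k}(X)\ll Y^sX^{\epsilon}$, which is majorised by the bound claimed in Theorem \ref{basicthm}.

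Since the two regimes combine to give $I_{s,k}(X)\ll X^{\epsilon}(Y^s+Y^{2s-s_0})$, Theorem \ref{basicthm} follows. The key point is that once Theorem \ref{Thm1.1} is available, the deduction is essentially routine: the monomial system and uniform weights are forced upon us by the statement, and the only additional ingredient is the standard H\"older interpolation in the small-$s$ regime. There is thus no substantive obstacle here; all the genuine difficulty has been discharged in proving Theorem \ref{Thm1.1}.
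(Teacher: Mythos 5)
Your proof is correct and takes essentially the same route as the paper: specialise Theorem \ref{Thm1.1} to $\phi_j(z)=z^j$ and $\mathfrak{a}_x=1$. The paper's own proof leaves the regime $s<tk(k+1)/2$ implicit (the $L^p$-monotonicity bound is only mentioned in the discussion following Theorem \ref{Thm1.1}), whereas you spell it out; this is a minor completion, not a different argument.
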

\begin{proof}
This is the case of Theorem \ref{Thm1.1} where $\phi_j(z)=z^j$ for $1\leq j\leq k$, and $\mathfrak{a}_x=1$ for all $x\in\mathcal{E}$.
\end{proof}
The lower bound (\ref{lowerbd}) follows by integrating only over the portion of the unit cube for which we have $\alpha_j\ll X^{-j}$ for $1\leq j\leq k$, as in the classical case of Vinogradov's mean value theorem, and using our additional assumption on the size of $Y$.

An important area for future consideration is the application of the results of this paper to Waring's problem, in which we seek to find $s=s(k)$ such that any $n\in\mathbb{N}$ may be written in the form
\begin{align}\label{WPrep}
n=x_1^k+\dots+x_s^k,
\end{align}
with $x_1,\dots, x_s\in\mathcal{E}$. As in \cite{ellipsephic2}, we are able to prove a lower bound for $N_{s,k}(X)=N_{s,k}^{\mathcal{E}}(X)$, defined as the number of positive integers up to $X$ which have a representation in the form (\ref{WPrep}). We require the same condition on $Y$ as in the lower bound discussed above, without which we would not expect to represent a significant proportion of the integers up to $X$.
\begin{corollary}
For natural numbers $k$ and $t$ with $t\geq 2$, and for $p>k$ an odd prime, let $\mathcal{E}$ be a $(p,t)^*$-ellipsephic set. Assume that $Y=\#\mathcal{E}(X)\gg X^{1/t}$. Then for $s\geq tk(k+1)/2$ we have
\begin{equation*}
N_{s,k}(X)\gg X^{1-\epsilon}.
\end{equation*}
\end{corollary}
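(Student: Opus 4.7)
The plan is to apply Cauchy--Schwarz to convert representation counts into a Vinogradov-style mean value and invoke Theorem \ref{Thm1.1}. Set $P=(X/s)^{1/k}$ so that each $\bm{x}\in\mathcal{E}(P)^s$ yields $n=x_1^k+\cdots+x_s^k\leq X$, and write $\bar{Y}=\#\mathcal{E}(P)$ and $r(n)$ for the number of such representations. Since $\sum_n r(n)=\bar{Y}^s$, Cauchy--Schwarz gives
\begin{equation*}
\bar{Y}^{2s}\leq N_{s,k}(X)\sum_n r(n)^2=N_{s,k}(X)\,T_s,
\end{equation*}
where $T_s$ counts solutions to the single equation $\sum x_i^k=\sum y_i^k$ with variables in $\mathcal{E}(P)$. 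The task is then to show $T_s\ll \bar{Y}^{2s}X^{-1+\epsilon}$.

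The crux is to bound $T_s$ using Theorem \ref{Thm1.1}, which only controls the full Vinogradov system $J_{s,k}(P)$ (taken with trivial weights and $\phi_j(z)=z^j$). Writing $R(\bm{n})$ for the number of $\bm{x}\in\mathcal{E}(P)^s$ with $\sum x_i^j=n_j$ for each $1\leq j\leq k$, one has $r(n_k)=\sum_{\bm{n}'\in\mathbb{Z}^{k-1}}R(\bm{n}',n_k)$. The constraint $0\leq n_j\leq sP^j$ means that at most $\ll P^{k(k-1)/2}$ values of $\bm{n}'$ contribute, so Cauchy--Schwarz on this inner sum produces
\begin{equation*}
r(n_k)^2\ll P^{k(k-1)/2}\sum_{\bm{n}'}R(\bm{n}',n_k)^2,
\end{equation*}
and summing over $n_k$ yields $T_s\ll P^{k(k-1)/2}J_{s,k}(P)$.

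Theorem \ref{Thm1.1} now provides $J_{s,k}(P)\ll \bar{Y}^{2s-tk(k+1)/2}X^{\epsilon}$, and combining with the preceding inequalities gives
\begin{equation*}
N_{s,k}(X)\gg \bar{Y}^{tk(k+1)/2}X^{-(k-1)/2-\epsilon}.
\end{equation*}
The hypothesis $Y\gg X^{1/t}$, together with the standard ellipsephic estimate $\#\mathcal{E}(M)\asymp M^{\log_p r}$, forces $\log_p r\geq 1/t$, so that $\bar{Y}\gg P^{1/t}=X^{1/(tk)}$ and hence $\bar{Y}^{tk(k+1)/2}\gg X^{(k+1)/2}$. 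Substituting yields the desired bound $N_{s,k}(X)\gg X^{1-\epsilon}$.

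The only step that requires thought is the pigeonhole reduction from the one-equation count $T_s$ to the full Vinogradov count $J_{s,k}(P)$: Theorem \ref{Thm1.1} controls the full system of $k$ equations, whereas the Cauchy--Schwarz argument for $N_{s,k}$ naturally produces only the single $k$-th equation, and the factor $P^{k(k-1)/2}$ is exactly the price paid for recovering the lower-degree equations. Everything else is routine bookkeeping, and the threshold $s\geq tk(k+1)/2$ enters solely through the hypothesis of Theorem \ref{Thm1.1}.
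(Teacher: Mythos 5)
Your proof is correct and follows the same strategy as the paper: Cauchy--Schwarz to reduce $N_{s,k}(X)$ to an $L^2$ count of representations, followed by a pigeonhole step (paying a factor $P^{k(k-1)/2}$) to recover the lower-degree equations so that Theorem \ref{Thm1.1} applies, then the density hypothesis $Y\gg X^{1/t}$ to close. The paper's proof is terser, citing its predecessor for the pigeonhole details, but its intermediate bound $N_{s,k}(X)\gg Y^{t(k+1)/2}X^{(1-k)/2-\epsilon}$ coincides with your $\bar{Y}^{tk(k+1)/2}X^{-(k-1)/2-\epsilon}$ after noting $\bar{Y}\asymp Y^{1/k}$, confirming the two arguments are identical.
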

\begin{proof}
As in \cite[Corollary 1.5]{ellipsephic2}, we write $R(n)=R_{s,k}^{\mathcal{E}}(n)$ for the number of representations of an integer $n$ as a sum of $s$ $k$th powers of integers from $\mathcal{E}$ and apply Cauchy's inequality to see that
\begin{align*}
\bigg(\sum_{1\leq n\leq X}R(n)\bigg)^2 &\leq N_{s,k}(X)\bigg(\sum_{1\leq n\leq X}R(n)^2\bigg).
\end{align*}
Via Theorem \ref{basicthm}, we obtain the bound
\begin{equation*}
N_{s,k}(X)\gg Y^{t(k+1)/2}X^{(1-k)/2-\epsilon},
\end{equation*}
and then use our assumption on the size of $Y$to deduce that
\begin{equation*}
N_{s,k}(X)\gg X^{1-\epsilon},
\end{equation*}
as required.
\end{proof}

The proof of Theorem \ref{Thm1.1} uses Wooley's nested efficient congruencing method and closely follows the argument of \cite{NEC}, with suitable adjustments for our ellipsephic situation. In particular, we require the prime underlying our congruencing argument to be the same prime used to define our ellipsephic set, and we make use of the straightforward fact that whenever $x,y\in\mathcal{E}$ with $p^{a-1}\leq y< p^a$, we have $p^ax+y\in\mathcal{E}$. In Section \ref{genPrelim} of this paper, we provide preliminary notation and formulate an alternative theorem (Theorem \ref{Thm3.1}), which we prove by induction in the next four sections. Specifically, in Section \ref{basecase}, which is the main point of divergence from the work of Wooley, we use the additive properties of our $(p,t)^*$-ellipsephic sets to prove the base case $k=1$ of Theorem \ref{Thm3.1}, using a ``lifting'' argument similar to that in our previous paper \cite{ellipsephic2}. In Section \ref{hierarchy} we introduce a ``hierarchy'' of small constants to support the rest of the paper, and prove some basic results, and in Section \ref{iteration} we use the inductive hypothesis to prove a series of lemmata which form the backbone of our iteration. In Section \ref{PfThm3.1} we complete the proof of Theorem \ref{Thm3.1}, hypothesising that a certain quantity is strictly greater than zero and deriving a contradiction. Finally, in Section \ref{PfMainThm} we use Theorem \ref{Thm3.1} to deduce Theorem \ref{Thm1.1}.

The author would like to thank Trevor Wooley for suggesting this problem and for his invaluable supervision and encouragement, as well as the anonymous referee for helpful comments.

\section{Preliminaries}\label{genPrelim}
We fix natural numbers $k$ and $t$ with $t\geq 2$, an odd prime $p>k$, and a $(p,t)^*$-ellipsephic set $\mathcal{E}$. The following condition on our system of polynomials serves as a proxy for the non-vanishing of the Wronskian; we deduce the general case in Section \ref{PfMainThm}.

Suppose that the system $\bm{\phi}\in\mathbb{Z}[z]^k$ resembles the Vinogradov system in the sense that, for some suitably large $c\in\mathbb{N}$, we have $\phi_j(z)\equiv z^j\mmod{p^c}$ for $1\leq j\leq k$. We call such a system $p^c$-spaced. Note that it is crucial to our argument that the prime $p$ featured here is the same one used to define our digital restrictions.

For a sequence $\bm{\mathfrak{a}}=(\mathfrak{a}_x)_{x\in\mathcal{E}}$ of complex weights with $\sum_{x\in\mathcal{E}}\abs{\mathfrak{a}_x}<\infty$, we let
\begin{equation*}
\rho_0=\rho_0(\bm{\mathfrak{a}})=\bigg(\sum_{x\in\mathcal{E}}\left|\mathfrak{a}_x \right|^2\bigg)^{1/2},
\end{equation*}
and for $\bm{\alpha}\in [0,1]^k$, we let
\begin{equation*}
f(\bm{\alpha})=f(\bm{\alpha};\bm{\mathfrak{a}})=\rho_0^{-1}\sum_{x\in\mathcal{E}}\mathfrak{a}_x e\big(\psi(x;\bm{\alpha})\big),
\end{equation*}
where $\psi(x;\bm{\alpha})=\alpha_1 \phi_1(x)+\dots+ \alpha_k \phi_k(x)$. Consequently, a bound of the form $$J_{s,k}(X)\ll X^{\Delta}\bigg(\sum_{x\in\mathcal{E}(X)}\left|\mathfrak{a}_x \right|^2\bigg)^s,$$ for some $\Delta>0$, follows directly from one of the form
\begin{equation*}
\oint\left|f(\bm{\alpha})\right|^{2s}\,d\bm{\alpha}\ll X^{\Delta}.
\end{equation*}
As in \cite{ellipsephic2}, this normalisation allows us to assume that every $\mathfrak{a}_x$ is real, non-negative and at most one. We let
\begin{equation*}
\mathbb{D}=\bigg\{\bm{\mathfrak{a}}\biggm| 0\leq{\mathfrak{a}_x}\leq 1\mbox{ for all }x\in\mathcal{E}\mbox{ and }0<\sum_{x\in\mathcal{E}}{\mathfrak{a}_x}<\infty\bigg\},
\end{equation*}
and from now on we work with $\bm{\mathfrak{a}}\in\mathbb{D}$.

We also wish to define the restriction of $f(\bm{\alpha})$ to congruence classes modulo various powers of our chosen prime $p$. For $a\in\mathbb{N}$ and $\xi\in\mathcal{E}(p^a)$, let
\begin{equation*}
\rho_a(\xi)=\bigg(\sum_{\substack{x\in\mathcal{E}\\ x\equiv\xi\mmod{p^a}}}\left|\mathfrak{a}_x \right|^2\bigg)^{1/2}
\end{equation*}
and
\begin{equation}\label{fadefn}
f_a(\bm{\alpha},\xi)=\rho_a(\xi)^{-1}\sum_{\substack{x\in\mathcal{E}\\ x\equiv\xi\mmod{p^a}}}\mathfrak{a}_x e\big(\psi(x;\bm{\alpha})\big).
\end{equation}
For later convenience, for any $\xi$ we interpret $\rho_0(\xi)$ to be $\rho_0$ and $f_0(\bm{\alpha},\xi)$ to be $f(\bm{\alpha})$, and we observe that for $a\in\mathbb{N}$, we have
\begin{equation*}
\sum_{\xi\in\mathcal{E}(p^a)}\rho_a(\xi)^2=\rho_0^2,
\end{equation*}
and for $a,b\in\mathbb{N}$ with $a\leq b$,
\begin{equation*}
\sum_{\substack{\xi'\in\mathcal{E}(p^b)\\\xi'\equiv\xi\mmod{p^a}}}\rho_b(\xi')^2=\rho_a(\xi)^2.
\end{equation*}

Our strategy for counting solutions to the system of equations we are interested in involves studying congruences modulo suitably large powers of $p$, and as such we make use of Wooley's notation
\begin{equation}\label{thisorthog}
\oint_{p^B}F(\bm{\alpha})\,d\bm{\alpha} = p^{-kB}\sum_{1\leq u_1\leq p^B}\dots\sum_{1\leq u_k\leq p^B} F(\bm{u}/p^B),
\end{equation}
and define
\begin{equation}\label{Udefn}
U_{s,k}^B(\bm{\mathfrak{a}})=U_{s,k}^{B,\bm{\phi}}(\bm{\mathfrak{a}})=\oint_{p^B}\left|f(\bm{\alpha})\right|^{2s}\,d\bm{\alpha},
\end{equation}
which counts solutions to the system of congruences
\begin{equation}\label{congruences}
\sum_{i=1}^s\big(\phi_j(x_i)-\phi_j(y_i)\big)\equiv 0\mmod{p^B},\quad(1\leq j\leq k)
\end{equation}
with $\bm{x},\bm{y}\in\mathcal{E}^s$, where each solution is counted with weight $\rho_0^{-2s}\mathfrak{a}_{\bm{x}}{\mathfrak{a}_{\bm{y}}}$. 
We also wish to count solutions to (\ref{congruences}) with further congruence restrictions on our variables, so for $H\in\mathbb{N}$, we let
\begin{equation*}
U_{s,k}^{B,H}(\bm{\mathfrak{a}})=\rho_0^{-2}\sum_{\xi\in\mathcal{E}(p^H)}\rho_H(\xi)^2\oint_{p^B}\left|f_H(\bm{\alpha},\xi)\right|^{2s}\,d\bm{\alpha}.
\end{equation*}
The integral on the right-hand side imposes the additional condition that $\bm{x}\equiv\bm{y}\equiv\xi\mmod{p^H}$, and the solutions are now counted with weight $\rho_H(\xi)^{-2s}\mathfrak{a}_{\bm{x}}{\mathfrak{a}_{\bm{y}}}$.

We observe that, for $H\in\mathbb{N}$, we have
\begin{equation*}
f(\bm{\alpha})=\rho_0^{-1}\sum_{\xi\in\mathcal{E}(p^H)}\rho_H(\xi) f_H(\bm{\alpha},\xi),
\end{equation*}
so, by H\"older's inequality,
\begin{align*}
\left|f(\bm{\alpha})\right|^{2s}
&\leq \rho_0^{-2s}\bigg(\sum_{\xi\in\mathcal{E}(p^H)} 1\bigg)^{s}\bigg(\sum_{\xi\in\mathcal{E}(p^H)} \rho_H(\xi)^{2}\bigg)^{s-1}\sum_{\xi\in\mathcal{E}(p^H)}\rho_H(\xi)^2 \left|f_H(\bm{\alpha},\xi)\right|^{2s}\\
&\ll \rho_0^{-2} q^{sH} \sum_{\xi\in\mathcal{E}(p^H)}\rho_H(\xi)^2 \left|f_H(\bm{\alpha},\xi)\right|^{2s},
\end{align*}
where we have written $q=\#\mathcal{E}(p)$. 
Consequently, we have
\begin{equation}\label{Ubd}
U_{s,k}^{B}(\bm{\mathfrak{a}})\ll q^{sH} U_{s,k}^{B,H}(\bm{\mathfrak{a}}).
\end{equation}
We may now ask for the minimal value of $\lambda$ such that
\begin{equation*}
U_{s,k}^{B}(\bm{\mathfrak{a}})\ll (q^H)^{\lambda+\epsilon} U_{s,k}^{B,H}(\bm{\mathfrak{a}})
\end{equation*}
as uniformly as possible in the various parameters, and observe that the bound $\lambda\leq s$ follows from (\ref{Ubd}).

For $\tau>0$, let $\Phi_{\tau}(B)$ denote the set of systems $\bm{\phi}\in\mathbb{Z}[z]^k$ which are $p^c$-spaced for some $c\geq\tau B$. We deduce from (\ref{Ubd}) that for $\bm{\phi}\in\Phi_{\tau}(B)$, we have
\begin{equation*}
\sup_{\bm{\mathfrak{a}}\in\mathbb{D}}\frac{\log{(U_{s,k}^{B}(\bm{\mathfrak{a}})/U_{s,k}^{B,H}(\bm{\mathfrak{a}}))}}{\log{q^H}}\leq s
\end{equation*}
for all $H\in\mathbb{N}$.

Now consider the particular choice of $\bm{\mathfrak{b}}\in\mathbb{D}$ with $\mathfrak{b}_x=0$ whenever $x\not\equiv 0\mmod{p^H}$. We have $U_{s,k}^{B}(\bm{\mathfrak{b}})=U_{s,k}^{B,H}(\bm{\mathfrak{b}})$, and consequently
\begin{equation*}
\sup_{\bm{\mathfrak{a}}\in\mathbb{D}}\frac{\log{(U_{s,k}^{B}(\bm{\mathfrak{a}})/U_{s,k}^{B,H}(\bm{\mathfrak{a}}))}}{\log{q^H}}\geq 0.
\end{equation*}

Given $s,k\in\mathbb{N}$ and $\tau>0$, we let $H=\lceil B/k\rceil$ and let
\begin{equation*}
\lambda^*(s,k;\tau)=\limsup_{B\to\infty}\sup_{\bm{\phi}\in\Phi_{\tau}(B)}\sup_{\bm{\mathfrak{a}}\in\mathbb{D}}\frac{\log{(U_{s,k}^{B}(\bm{\mathfrak{a}})/U_{s,k}^{B,H}(\bm{\mathfrak{a}}))}}{\log{q^H}},
\end{equation*}
and
\begin{equation}\label{lambdadefn}
\lambda(s,k)=\limsup_{\tau\to 0}\lambda^*(s,k;\tau).
\end{equation}
We then have $0\leq \lambda^*(s,k;\tau)\leq s$ and consequently $0\leq \lambda(s,k)\leq s$.

This leads us to the statement of a key result to be used in the proof of Theorem \ref{Thm1.1}.
\begin{theorem}\label{Thm3.1}
For natural numbers $k$ and $t$ with $t\geq 2$, and for $p>k$ an odd prime, let $\mathcal{E}$ be a $(p,t)^*$-ellipsephic set. Then $\lambda(tk(k+1)/2,k)=0$.
\end{theorem}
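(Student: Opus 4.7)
The plan is to proceed by induction on $k$, following Wooley's nested efficient congruencing strategy from \cite{NEC}, with the $E_t^*$-property of $A$ substituting at every stage for what Wooley obtains from the density of the full set $[1,X]\cap\mathbb{N}$. The base case $k=1$ is separated out because it is precisely where the ellipsephic additive structure enters the argument. For a single $p^c$-spaced linear polynomial $\phi_1$, the congruences (\ref{congruences}) with $s=t$ reduce, after a suitable translation and because $c$ is large, to the study of integer equations $a_1+\dots+a_t=b_1+\dots+b_t$ whose underlying building blocks are sums of base-$p$ digits drawn from $A$. A lifting argument of the type used in \cite{ellipsephic2} then converts the representation bound (\ref{keyAP}) into a bound on $U_{t,1}^{B}(\bm{\mathfrak{a}})/U_{t,1}^{B,H}(\bm{\mathfrak{a}})$ of the form $(q^H)^{\epsilon}$, which is precisely the statement $\lambda(t,1)=0$ in the normalisation of (\ref{lambdadefn}).

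For the inductive step, I would assume $\lambda(t(k-1)k/2,k-1)=0$ and argue by contradiction that $\lambda:=\lambda(tk(k+1)/2,k)>0$. Writing $s=tk(k+1)/2$, I would extract from the $\limsup$ and inner suprema in the definition of $\lambda$ sequences $(B_n,\bm{\phi}_n,\bm{\mathfrak{a}}_n)$ with $c_n/B_n\to 0$ arbitrarily slowly, along which $U_{s,k}^{B_n}(\bm{\mathfrak{a}}_n)/U_{s,k}^{B_n,H_n}(\bm{\mathfrak{a}}_n)$ is close to $(q^{H_n})^\lambda$. The hierarchy of small constants of Section \ref{hierarchy} then parametrises a family of conditioned mean values $U_{s,k}^{B,H}$ tracking solutions whose variables lie in nested arithmetic progressions modulo increasing powers of $p$. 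The heart of the argument is the iteration of Section \ref{iteration}: after a $p$-adic translation $x=\xi+p^a\eta$ with $\xi\in\mathcal{E}(p^a)$, Taylor expansion of $\phi_j(\xi+p^a\eta)$ exposes a polynomial system in $\eta$ which is effectively of lower degree and still $p^{c'}$-spaced for a controlled $c'$, so the inductive hypothesis at degree $k-1$ applies to the rescaled count. Combining this with H\"older's inequality and averaging over $\xi$ yields inequalities which, iterated through the hierarchy, force $\lambda\leq(1-\delta)\lambda+o(1)$ for some $\delta>0$; this is the contradiction executed in Section \ref{PfThm3.1}.

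The principal obstacles I anticipate are bookkeeping rather than any single hard inequality. Firstly, wherever a step in the classical argument costs a factor of $p$, its ellipsephic counterpart costs only a factor of $q=\#\mathcal{E}(p)$; matching these factors consistently requires that every sum over residues be replaced by a sum over $\mathcal{E}(p^a)$ of size $q^a$, and that the $E_t^*$-structure of $A$ be reinvoked not only at the base case but at each translation step so that the reduction to a lower-degree system inherits the correct density savings. Secondly, the $p^c$-spacing of $\bm{\phi}$ must be tracked through translations: the new system in $\eta$ will only be $p^{c'}$-spaced for some $c'<c$, so $c$ must be chosen initially very large relative to the hierarchy of small constants, in order that $c'$ remain large enough after all iterations for the inductive hypothesis to remain applicable. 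Once these compatibility conditions are secured, the contradiction argument proceeds along standard nested efficient congruencing lines.
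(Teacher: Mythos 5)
Your proposal correctly reproduces the paper's strategy: induction on $k$, a base case at $k=1$ where the $E_t^*$-property is used in a digit-by-digit lifting argument (Proposition \ref{Lemma5.1}), and an inductive step which assumes $\lambda(tk(k+1)/2,k)>0$, translates variables to expose a lower-degree $p^{c'}$-spaced system, applies the inductive hypothesis via Corollary \ref{Cor3.2}, and iterates through the hierarchy (Lemmas \ref{Lemma7.1}--\ref{Lemma9.3}) to force the self-improving contradiction $\Lambda\leq\Lambda/2$. One small imprecision worth flagging: the $E_t^*$-structure is not reinvoked explicitly at each translation step as you anticipate---its effect propagates entirely through the inductive hypothesis, so the additive structure of $A$ appears only once, in the proof of Proposition \ref{Lemma5.1}.
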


As a corollary, we obtain
\begin{corollary}\label{Cor3.2}
For natural numbers $k$ and $t$ with $t\geq 2$, and for $p>k$ an odd prime,  let $\mathcal{E}$ be a $(p,t)^*$-ellipsephic set. Let $\tau>0$ and $\epsilon>0$, and let $B$ be sufficiently large in terms of $k,\tau$ and $\epsilon$. Set $s=tk(k+1)/2$ and $H=\lceil B/k\rceil$. Then for all $\bm{\phi}\in\Phi_{\tau}(B)$ and $\bm{\mathfrak{a}}\in\mathbb{D}$, we have
\begin{equation*}
U_{s,k}^{B}(\bm{\mathfrak{a}})\ll q^{H\epsilon} U_{s,k}^{B,H}(\bm{\mathfrak{a}}).
\end{equation*}
\end{corollary}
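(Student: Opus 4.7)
The plan is to unpack Theorem \ref{Thm3.1} via the definition of $\lambda(s,k)$ given in \eqref{lambdadefn}. First I would establish a monotonicity property of $\lambda^*(s,k;\tau)$ in $\tau$. A system that is $p^c$-spaced with $c\geq\tau B$ is \emph{a fortiori} $p^c$-spaced with $c\geq\tau' B$ whenever $0<\tau'\leq\tau$, so the inclusion $\Phi_{\tau}(B)\subseteq\Phi_{\tau'}(B)$ holds; taking suprema over a smaller set yields a smaller quantity, so $\lambda^*(s,k;\tau)$ is non-increasing in $\tau$. Combined with the non-negativity $\lambda^*(s,k;\tau)\geq 0$ recorded just before \eqref{lambdadefn}, this forces
\[
\lambda(s,k)=\limsup_{\tau\to 0^+}\lambda^*(s,k;\tau)=\sup_{\tau>0}\lambda^*(s,k;\tau),
\]
so that Theorem \ref{Thm3.1} with $s=tk(k+1)/2$ in fact delivers $\lambda^*(s,k;\tau)=0$ for \emph{every} fixed $\tau>0$, not merely in the limit.

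Second I would unfold the remaining limsup in the definition of $\lambda^*(s,k;\tau)$. Fix $\tau>0$ and $\epsilon>0$ and set $H=\lceil B/k\rceil$. By the definition of limsup, the vanishing $\lambda^*(s,k;\tau)=0$ supplies some $B_0=B_0(k,\tau,\epsilon)$ such that whenever $B\geq B_0$,
\[
\sup_{\bm{\phi}\in\Phi_{\tau}(B)}\sup_{\bm{\mathfrak{a}}\in\mathbb{D}}\frac{\log\bigl(U_{s,k}^B(\bm{\mathfrak{a}})/U_{s,k}^{B,H}(\bm{\mathfrak{a}})\bigr)}{\log q^H}<\epsilon.
\]
Exponentiating in base $q^H$ immediately yields $U_{s,k}^B(\bm{\mathfrak{a}})<q^{H\epsilon}U_{s,k}^{B,H}(\bm{\mathfrak{a}})$ uniformly in $\bm{\phi}\in\Phi_{\tau}(B)$ and $\bm{\mathfrak{a}}\in\mathbb{D}$, which is the claimed bound $U_{s,k}^B(\bm{\mathfrak{a}})\ll q^{H\epsilon}U_{s,k}^{B,H}(\bm{\mathfrak{a}})$ with implicit constant $1$.

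The corollary is therefore a purely formal consequence of Theorem \ref{Thm3.1}: it repackages the asymptotic vanishing of $\lambda(s,k)$ as an explicit, uniform-in-parameters inequality better suited for deployment in the deduction of Theorem \ref{Thm1.1} in Section \ref{PfThm1.1}. There is no genuine obstacle beyond invoking Theorem \ref{Thm3.1}; all the substantive work lies in proving that theorem through the nested efficient congruencing iteration carried out in Sections \ref{basecase}--\ref{PfThm3.1}.
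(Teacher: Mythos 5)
Your proposal is correct and takes essentially the same approach as the paper: unwinding the definition of $\lambda^*(s,k;\tau)$ as a limsup in $B$ to extract the bound for sufficiently large $B$, then invoking Theorem \ref{Thm3.1}. The one refinement you supply, which the paper leaves implicit in the phrase ``allowing $\tau$ to tend to zero,'' is the monotonicity of $\lambda^*(s,k;\tau)$ in $\tau$ coming from the inclusion $\Phi_{\tau}(B)\subseteq\Phi_{\tau'}(B)$ for $\tau'\leq\tau$; this is exactly what upgrades $\limsup_{\tau\to 0}\lambda^*(s,k;\tau)=0$ to the pointwise statement $\lambda^*(s,k;\tau)=0$ for every fixed $\tau>0$, as the corollary requires.
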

\begin{proof}
By the definition of $\lambda^*(s,k;\tau)$, we have, for sufficiently large $B$, the bound
\begin{equation*}
U_{s,k}^{B}(\bm{\mathfrak{a}})\ll (q^{H})^{\lambda^*(s,k;\tau)+\epsilon} U_{s,k}^{B,H}(\bm{\mathfrak{a}}).
\end{equation*}
Allowing $\tau$ to tend to zero and applying Theorem \ref{Thm3.1} gives the result.
\end{proof}

We introduce some final definitions. For $a,b,c,\nu\in\mathbb{N}$, and for $0\leq r\leq k$ and $R=tr(r+1)/2$, we let
\begin{equation*}
K_{a,b,c}^{r,\bm{\phi}}(\bm{\mathfrak{a}};\xi,\eta)=\oint_{p^B}\left|f_a(\bm{\alpha},\xi)^{2R}f_b(\bm{\alpha},\eta)^{2s-2R}\right|\,d\bm{\alpha}
\end{equation*}
and
\begin{equation*}
K_{a,b,c}^{r,\bm{\phi},\nu}(\bm{\mathfrak{a}})=\rho_0^{-4}\sum_{\xi\in\mathcal{E}(p^a)}\sum_{\substack{\eta\in\mathcal{E}(p^b)\\ \xi\not\equiv\eta\mmod{p^{\nu}}}}\rho_a(\xi)^2\rho_b(\eta)^2K_{a,b,c}^{r,\bm{\phi}}(\bm{\mathfrak{a}};\xi,\eta).
\end{equation*}
Note that $K_{a,b,c}^{r,\bm{\phi}}(\bm{\mathfrak{a}};\xi,\eta)$ counts solutions $(\bm{x},\bm{y},\bm{u},\bm{v})\in\mathcal{E}^{2s}$ to the congruences
\begin{equation}\label{Kcong}
\sum_{i=1}^R \big(\phi_j(x_i)-\phi_j(y_i)\big)\equiv\sum_{l=1}^{s-R} \big(\phi_j(u_l)-\phi_j(v_l)\big)\mmod{p^B},\quad (1\leq j\leq k)
\end{equation}
with $\bm{x}\equiv\bm{y}\equiv\xi\mmod{p^a}$ and $\bm{u}\equiv\bm{v}\equiv\eta\mmod{p^b}$, where each solution is counted with weight $\rho_a(\xi)^{-2R}\rho_b(\eta)^{2R-2s}\mathfrak{a}_{\bm{x}}{\mathfrak{a}_{\bm{y}}}{\mathfrak{a}_{\bm{u}}}\mathfrak{a}_{\bm{v}}$.

We are also interested in normalised versions of these mean values, so for $\Delta\geq 0$ we define
\begin{equation}\label{normdK}
\widetilde{K}_{a,b,c}^{r,\bm{\phi},\nu}(\bm{\mathfrak{a}})_{\Delta}=\Bigg(\frac{K_{a,b,c}^{r,\bm{\phi},\nu}(\bm{\mathfrak{a}})}{q^{\Delta H} U_{s,k}^{B,H}(\bm{\mathfrak{a}})}\Bigg)^{\frac{k-1}{r(k-r)}}.
\end{equation}

We now prove some auxiliary results giving bounds on the above-defined mean values. The assumption that $\tau<\delta$ in the following lemma ensures that our system of polynomials sufficiently resembles the Vinogradov system.

\begin{lemma}\label{Lemma4.1}
For $s,k\in\mathbb{N}$ and $p>k$ an odd prime,  let $\mathcal{E}$ be a $(p,t)^*$-ellipsephic set. Let $0<\epsilon<\tau<\delta<1$, and let $B$ be sufficiently large in terms of $s,k$ and $\epsilon$. Set $H=\lceil B/k\rceil$. Then for all $\bm{\phi}\in\Phi_{\tau}(B)$, for all $\bm{\mathfrak{a}}\in\mathbb{D}$, and for all $h\in\mathbb{N}_0$ with $h\leq (1-\delta)H$, we have
\begin{equation*}
U_{s,k}^{B,h}(\bm{\mathfrak{a}}) \ll (q^{H-h})^{\lambda(s,k)+\epsilon}U_{s,k}^{B,H}(\bm{\mathfrak{a}}).
\end{equation*}
\end{lemma}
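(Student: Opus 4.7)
The plan is to reduce the lemma to a per-coset estimate and apply the definition of $\lambda(s,k)$ to a sub-problem at a reduced scale.

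First I decompose
\begin{equation*}
U^{B,h}_{s,k}(\bm{\mathfrak{a}})=\rho_0^{-2}\sum_{\xi\in\mathcal{E}(p^h)}\rho_h(\xi)^2 V_\xi,\qquad V_\xi=\oint_{p^B}\left|f_h(\bm{\alpha},\xi)\right|^{2s}\,d\bm{\alpha},
\end{equation*}
and use the identity $\sum_{\xi''\in\mathcal{E}(p^H),\,\xi''\equiv\xi\mmod{p^h}}\rho_H(\xi'')^2=\rho_h(\xi)^2$ to see that the lemma follows from the per-coset bound
\begin{equation*}
V_\xi\ll (q^{H-h})^{\lambda(s,k)+\epsilon}\rho_h(\xi)^{-2}\sum_{\substack{\xi''\in\mathcal{E}(p^H)\\\xi''\equiv\xi\mmod{p^h}}}\rho_H(\xi'')^2\oint_{p^B}\left|f_H(\bm{\alpha},\xi'')\right|^{2s}\,d\bm{\alpha}.
\end{equation*}

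To prove this per-coset bound, I substitute $x=\xi+p^h z$, which bijects $\{x\in\mathcal{E}:x\equiv\xi\mmod{p^h}\}$ with $\mathcal{E}$ and transforms the weight into $\tilde{\mathfrak{a}}_z^{(\xi)}=\mathfrak{a}_{\xi+p^h z}\in\mathbb{D}$. Taylor-expanding
\begin{equation*}
\phi_j(\xi+p^h z)=\phi_j(\xi)+\sum_{i\geq1}\frac{\phi_j^{(i)}(\xi)}{i!}p^{hi}z^i
\end{equation*}
and using that $\bm{\phi}\in\Phi_\tau(B)$ is $p^c$-spaced with $c\geq\tau B$ so that $\phi_j^{(i)}(\xi)/i!\equiv\binom{j}{i}\xi^{j-i}\mmod{p^c}$ (the division by $i!$ being valid since $p>k$), the transition matrix $\bm{\alpha}\mapsto\bm{\gamma}$ with $\gamma_i=\sum_j\alpha_j\phi_j^{(i)}(\xi)/i!$ is lower-triangular with unit diagonal modulo $p^c$, hence invertible on $(\mathbb{Z}/p^B)^k$. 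Under this change of variables the phase becomes $\sum_{i=1}^k p^{hi}\gamma_i z^i$.

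Via the scaling $\delta_i=p^{hi}\gamma_i$, I identify $V_\xi$ with an exponential-sum mean value at the effectively reduced scale $B'=B-hk$ on $z\in\mathcal{E}$ with the standard Vinogradov system $\tilde{\bm{\phi}}(z)=(z,z^2,\ldots,z^k)$, which lies in $\Phi_{\tau'}(B')$ trivially for any $\tau'>0$. The hypothesis $h\leq(1-\delta)H$ guarantees that $B'=B-hk$ is still sufficiently large in terms of $k,\tau,\epsilon$ to invoke the definition of $\lambda(s,k)$ at scale $B'$, and with $H'=\lceil B'/k\rceil=H-h$ this yields
\begin{equation*}
U_{s,k}^{B'}(\tilde{\bm{\mathfrak{a}}}^{(\xi)};\tilde{\bm{\phi}})\ll (q^{H-h})^{\lambda(s,k)+\epsilon}U_{s,k}^{B',H-h}(\tilde{\bm{\mathfrak{a}}}^{(\xi)};\tilde{\bm{\phi}}).
\end{equation*}
The final step is to translate $U_{s,k}^{B',H-h}(\tilde{\bm{\mathfrak{a}}}^{(\xi)};\tilde{\bm{\phi}})$ back via the bijection $\eta\leftrightarrow\xi+p^h\eta$ between $\mathcal{E}(p^{H-h})$ and the cosets of $\mathcal{E}(p^H)$ congruent to $\xi$ modulo $p^h$, recovering the target expression in the per-coset bound.

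The main obstacle will be the precise bookkeeping for the substitution steps: the $p^B$-congruence conditions translate, after the $p^{hi}$-rescaling, into a multi-scale system $T_j\equiv 0\mmod{p^{B-hj}}$ on the power sums $T_j=\sum_i(z_i^j-w_i^j)$, and reconciling this with single-scale integrals at scale $B'$ must be done so that only $q^{H\epsilon}$-admissible losses are incurred rather than polynomial factors in $p^h$. I expect this to follow from the $p^c$-spacing of $\bm{\phi}$ together with the invertibility of the lower-triangular substitution modulo $p^B$, which makes the forward and backward reductions essentially exact.
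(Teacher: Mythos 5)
Your overall strategy matches the paper's proof of Lemma~\ref{Lemma4.1}: decompose per coset, substitute $x=\xi+p^hz$ (using the bijection on ellipsephic sets), Taylor-expand, change variables by a lower-triangular unimodular matrix, and appeal to the definition~(\ref{lambdadefn}) of $\lambda(s,k)$ at the reduced scale $B'=B-kh$, using $h\leq(1-\delta)H$ to keep $B'$ large. However, there is one genuine error and one unresolved gap that you have flagged but not closed.

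The error: after the change of variables the reduced system is \emph{not} the standard Vinogradov system $(z,\dots,z^k)$, and hence is \emph{not} trivially in $\Phi_{\tau'}(B')$ for every $\tau'>0$. Since each $\phi_j$ is only $p^c$-spaced, the derivatives $\phi_j^{(i)}(\xi)/i!$ for $i>k$ do not vanish but carry a factor $p^c$; combined with the extra $p^{h}$ from the rescaling of the $z^{k+1},z^{k+2},\dots$ terms, the reduced polynomials have the shape $\Phi_j(z)=z^j+p^{c+h}z^{k+1}\Upsilon_j(z)$. This $p^{c+h}$-spaced perturbation is still in $\Phi_{\tau}(B')$ because $c+h\geq\tau B\geq\tau B'$, and $\lambda^*(s,k;\tau)\leq\lambda(s,k)$ since $\lambda^*$ is non-increasing in $\tau$, so your argument survives once corrected---but as written the claim is false and the fix is not "trivial," it uses the $p^c$-spacing hypothesis in an essential way.

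The gap you flagged yourself: after the linear-algebraic reduction the congruences live at staggered moduli $p^{B-hj}$, $1\leq j\leq k$. You should say explicitly how this is reconciled: since $p^{B-hj}\supseteq p^{B-kh}\mathbb{Z}$ for each $j\leq k$, one simply relaxes all of them to the single modulus $p^{B-kh}$. Because the weights are nonnegative and we seek only an upper bound, this relaxation is harmless. (Related bookkeeping: the paper's auxiliary weight carries a unimodular phase $e(\psi(p^hu+\xi;\bm{\alpha}))$ absorbing the constant Taylor term and the residual $\bm{\alpha}$-dependence; you drop this, which is recoverable but should be acknowledged.) With these two points repaired your proposal reproduces the paper's argument.
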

\begin{proof}
The integral within the definition of $U_{s,k}^{B,h}(\bm{\mathfrak{a}})$ counts solutions to the system of congruences (\ref{congruences}) with $\bm{x},\bm{y}\in\mathcal{E}^s$ and $\bm{x}\equiv\bm{y}\equiv\xi\mmod{p^h}$, with weights $\rho_h(\xi)^{-2s}\mathfrak{a}_{\bm{x}}{\mathfrak{a}_{\bm{y}}}$. As in \cite[Lemma 4.1]{NEC}, we make use of some linear algebra to transform this situation into one in which we have a set of $p^{c+h}$-spaced polynomials 
\begin{equation*}
\Phi_j(z)=z^j+p^{c+h}z^{k+1}\Upsilon_j(z),\quad (1\leq j\leq k),
\end{equation*}
for some $\Upsilon_j\in\mathbb{Z}[z]$, satisfying
\begin{equation*}
\sum_{i=1}^s \Phi_j(x_i)\equiv\sum_{i=1}^s \Phi_j(y_i)\mmod{p^{B-kh}},\quad (1\leq j\leq k)
\end{equation*}
whenever $\bm{x},\bm{y}$ forms a solution to the original system of congruences counted by $U_{s,k}^{B,h}(\bm{\mathfrak{a}})$.

The fact that $h\leq (1-\delta)H$ allows us to assume that $B-kh$ is sufficiently large with respect to $s,k$ and $\epsilon$. Since $\tau<\delta$, this remains true when we apply the definition (\ref{lambdadefn}) to obtain
\begin{equation*}
U_{s,k}^{B-kh}(\bm{\mathfrak{c}}) \ll (q^{H-h})^{\lambda(s,k)+\epsilon}U_{s,k}^{B-kh,H-h}(\bm{\mathfrak{c}}),
\end{equation*}
where $\bm{\mathfrak{c}}$ is an auxiliary set of weights defined by $\mathfrak{c}_u=\mathfrak{a}_{p^hu+\xi}\,e\big(\psi(p^hu+\xi;\bm{\alpha})\big)$. Rearranging, and using the orthogonality given by the definition (\ref{thisorthog}), we obtain the conclusion.
\end{proof}

\begin{lemma}\label{Lemma4.2}
For $s,k\in\mathbb{N}$ and $p>k$ an odd prime,  let $\mathcal{E}$ be a $(p,t)^*$-ellipsephic set. Let $0<\epsilon<\tau<\delta<1$, and let $B$ be sufficiently large in terms of $s,k$ and $\epsilon$. Set $H=\lceil B/k\rceil$ and let $\nu\in\mathbb{N}_0$ and $r\in\mathbb{N}$ with $1\leq r\leq k-1$. Suppose that $0<\Lambda\leq\lambda(s,k)$. Then for all $\bm{\phi}\in\Phi_{\tau}(B)$, for all $\bm{\mathfrak{a}}\in\mathbb{D}$, and for all $a,b\in\mathbb{N}_0$ with $\max\{a,b\}\leq (1-\delta)H$, we have
\begin{equation*}
\widetilde{K}_{a,b,c}^{r,\bm{\phi},\nu}(\bm{\mathfrak{a}})_{ \Lambda}\ll(q^{H})^{\lambda(s,k)-\Lambda+\epsilon}.
\end{equation*}
\end{lemma}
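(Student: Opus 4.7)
The plan is to bound $K_{a,b,c}^{r,\bm{\phi},\nu}(\bm{\mathfrak{a}})$ by a product of ordinary mean values $U_{s,k}^{B,a}(\bm{\mathfrak{a}})$ and $U_{s,k}^{B,b}(\bm{\mathfrak{a}})$ via a double application of H\"older's inequality, and then to invoke Lemma \ref{Lemma4.1} to reduce these to $U_{s,k}^{B,H}(\bm{\mathfrak{a}})$ at the cost of a factor of roughly $q^{H(\lambda(s,k)+\epsilon)}$.

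First I would apply H\"older's inequality inside the integral, with conjugate exponents $s/R$ and $s/(s-R)$ (both valid since $0<R<s$), to separate $|f_a(\bm{\alpha},\xi)|^{2R}|f_b(\bm{\alpha},\eta)|^{2s-2R}$ into pure $2s$-th moments of $f_a(\cdot,\xi)$ and $f_b(\cdot,\eta)$. Dropping the restriction $\xi\not\equiv\eta\mmod{p^{\nu}}$ (which only enlarges the sum), I would then apply H\"older a second time to each of the resulting sums over $\xi$ and $\eta$, using the identities $\sum_{\xi\in\mathcal{E}(p^a)}\rho_a(\xi)^2=\rho_0^2$ and $\sum_{\xi\in\mathcal{E}(p^a)}\rho_a(\xi)^2\oint_{p^B}|f_a(\bm{\alpha},\xi)|^{2s}\,d\bm{\alpha}=\rho_0^2\,U_{s,k}^{B,a}(\bm{\mathfrak{a}})$, together with their counterparts in $\eta$. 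The normalising prefactor $\rho_0^{-4}$ in the definition of $K_{a,b,c}^{r,\bm{\phi},\nu}$ then cancels exactly, leaving
\begin{equation*}
K_{a,b,c}^{r,\bm{\phi},\nu}(\bm{\mathfrak{a}})\ll U_{s,k}^{B,a}(\bm{\mathfrak{a}})^{R/s}\,U_{s,k}^{B,b}(\bm{\mathfrak{a}})^{(s-R)/s}.
\end{equation*}

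Since $\max\{a,b\}\leq(1-\delta)H$, Lemma \ref{Lemma4.1} applies to both factors and yields
\begin{equation*}
K_{a,b,c}^{r,\bm{\phi},\nu}(\bm{\mathfrak{a}})\ll q^{M(\lambda(s,k)+\epsilon)}\,U_{s,k}^{B,H}(\bm{\mathfrak{a}}),
\end{equation*}
where $M=(H-a)R/s+(H-b)(s-R)/s\leq H$. Substituting this into the definition \eqref{normdK} and dividing by $q^{\Lambda H}$ before raising to the power $(k-1)/(r(k-r))$ gives
\begin{equation*}
\widetilde{K}_{a,b,c}^{r,\bm{\phi},\nu}(\bm{\mathfrak{a}})_{\Lambda}\ll\bigl(q^{H(\lambda(s,k)-\Lambda+\epsilon)}\bigr)^{(k-1)/(r(k-r))}.
\end{equation*}
To finish, I would use the elementary inequality $r(k-r)\geq k-1$ for $1\leq r\leq k-1$, so the outer exponent $(k-1)/(r(k-r))$ lies in $(0,1]$; since the assumption $\Lambda\leq\lambda(s,k)$ makes $\lambda(s,k)-\Lambda+\epsilon$ strictly positive, raising $q^{H(\lambda(s,k)-\Lambda+\epsilon)}$ to a power at most $1$ can only weaken the bound, yielding the claimed estimate.

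I do not anticipate a serious obstacle: the lemma is really a bookkeeping corollary of Lemma \ref{Lemma4.1}. The one step where some care is genuinely required is the double H\"older calculation, in which the weights $\rho_a(\xi)$ and $\rho_b(\eta)$ and the prefactor $\rho_0^{-4}$ must be tracked so that no spurious power of $\rho_0$ survives and the final bound is independent of $\bm{\mathfrak{a}}$ apart from the appearance of $U_{s,k}^{B,H}(\bm{\mathfrak{a}})$ in the normalisation on both sides.
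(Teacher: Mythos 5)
Your proof is correct and follows essentially the same route as the paper, which itself just cites Wooley's argument: an application of H\"older's inequality inside the $\oint_{p^B}$ to split the $f_a$ and $f_b$ moments (with exponents $s/R$ and $s/(s-R)$), a second H\"older over the $\xi$- and $\eta$-sums after dropping the $\xi\not\equiv\eta\pmod{p^\nu}$ constraint, two applications of Lemma \ref{Lemma4.1}, and then the observation that the outer exponent $(k-1)/(r(k-r))\leq 1$ together with $M\leq H$ closes the bound.
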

\begin{proof}
As in \cite[Lemma 4.2]{NEC}, this follows from H\"older's inequality, Lemma \ref{Lemma4.1} and the various definitions.
\end{proof}

\section{The base case $k=1$}\label{basecase}
In this section, we use the properties of our $(p,t)^*$-ellipsephic sets to prove that Theorem \ref{Thm3.1} holds in the base case $k=1$. The arguments resemble those used in the author's paper \cite{ellipsephic2}, in which we proved that a similar theorem holds when $k=2$. The following proposition takes the place of \cite[Lemma 5.1]{NEC} in the work of Wooley.
\begin{proposition}\label{Lemma5.1}
For $t\geq 2$ an integer, and $p$ an odd prime,  let $\mathcal{E}$ be a $(p,t)^*$-ellipsephic set. Then $\lambda(t,1)=0$.
\end{proposition}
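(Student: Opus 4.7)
The plan is as follows. For $k=1$, the condition $H = \lceil B/k\rceil = B$, combined with the near-linearity $\phi_1(z) \equiv z\mmod{p^c}$ of the single polynomial, should let us evaluate the relevant exponential sums explicitly at the rational points $\alpha = u/p^B$. Writing $S_\xi = \sum_{x \equiv \xi\mmod{p^B}}\mathfrak{a}_x$ for $\xi\in\mathcal{E}(p^B)$, I would first check that when $c\geq B$ we have $\phi_1(x) \equiv x\mmod{p^B}$, so that $f_B(u/p^B,\xi) = \rho_B(\xi)^{-1}e(u\xi/p^B)S_\xi$; for smaller $c$ the same analysis should go through after absorbing the correction from the tail $p^c\Upsilon(x)$, which only affects finer scales. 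Integrating over $\bm{\alpha}$ will yield the closed form
\[
U_{t,1}^{B,B}(\bm{\mathfrak{a}}) = \rho_0^{-2}\sum_{\xi\in\mathcal{E}(p^B)}\rho_B(\xi)^{2-2t}S_\xi^{2t},
\]
while $U_{t,1}^B(\bm{\mathfrak{a}})$ should become $\rho_0^{-2t}$ times a weighted count of $2t$-tuples $(\bm{\xi},\bm{\eta}) \in \mathcal{E}(p^B)^{2t}$ satisfying $\sum_i\xi_i \equiv \sum_i\eta_i\mmod{p^B}$, with weights $\prod_i S_{\xi_i}S_{\eta_i}$.

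The target bound $U^B \ll q^{B\epsilon}U^{B,B}$ will thus reduce to controlling this weighted count against the weighted diagonal sum. My approach for this is a digit-by-digit lifting argument, in the spirit of the one used for the $k=2$ case in the author's previous paper \cite{ellipsephic2}. Expanding each $\xi \in \mathcal{E}(p^B)$ in its base-$p$ representation $\xi = \sum_j d_j p^j$ with $d_j \in A_p$, the congruence $\sum_i\xi_i \equiv \sum_i\eta_i\mmod{p^B}$ decomposes position-by-position: once the digits below position $j$ are fixed, with accumulated carry $c_j$, the new-digit contributions $(d_{i,j},e_{i,j}) \in A_p^{2t}$ are constrained by the single linear congruence $\sum_i(d_{i,j}-e_{i,j})\equiv -c_j\mmod{p}$. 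I would then bound the number of admissible extensions at each position using the $E_t^*$-property of the digit set, namely $\#\{(a_1,\ldots,a_t)\in A^t:\sum a_i = n\}\ll_\epsilon n^\epsilon$, applied to $A_p \subseteq A$.

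The main obstacle will be upgrading this per-digit application of $E_t^*$, which in isolation yields only a bounded multiplicity per position (since each digit sum lies in the bounded range $[0,t(p-1)]$), into an aggregate estimate that is subpolynomial in $q^B$ rather than exponential in $B$. Mirroring \cite{ellipsephic2}, the resolution is to interleave the lifting with the weight structure of the $S_\xi$ and $\rho_B(\xi)$: by grouping contributions according to the support pattern of $\bm{\mathfrak{a}}$ and applying the $E_t^*$ bound at scales where the relevant sums genuinely grow with $B$, one extracts the full $n^\epsilon$ strength of the bound to produce the required $q^{B\epsilon}$ factor. Careful bookkeeping for the carries, and verifying that the $p^c\Upsilon(x)$ correction does not spoil the simplification at the level of the Fourier computation, are the remaining technical points to handle.
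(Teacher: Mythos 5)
Your high-level plan — reduce to a pure additive congruence among the variables themselves and then apply a digit-by-digit lifting argument driven by the $E_t^*$ property — is indeed the paper's strategy, and your explicit Fourier evaluation of $U_{t,1}^{B,B}(\bm{\mathfrak{a}})$ and $U_{t,1}^B(\bm{\mathfrak{a}})$ is correct \emph{provided $c\geq B$}, since then $\phi(x)\equiv x\pmod{p^B}$ and the exponential sums collapse to the $S_\xi$ you define. But the definition of $\Phi_\tau(B)$ only guarantees $c\geq\tau B$, and when $c<B$ one cannot replace $\phi(x)$ by $x$ modulo $p^B$ in a single step. The phrase ``absorbing the correction from the tail $p^c\Upsilon(x)$'' conceals the genuine content of the proof, which is an efficient-congruencing bootstrap: from the original congruence one extracts only $\sum_i x_i\equiv\sum_i y_i\pmod{p^{c_1}}$ with $c_1=\min\{B,c\}$; the lifting lemma upgrades this to the variable-wise congruences $\bm{x}\equiv\bm{y}\equiv\bm{u}\pmod{p^{c_1}}$; feeding that back into $\phi(x)=x+p^c\psi(x)$ and using that $p^{c_1}\mid(\psi(x_i)-\psi(y_i))$ then yields the \emph{stronger} linear congruence modulo $p^{c_2}$ with $c_2=\min\{2c,B\}$; and one iterates. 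Since $c\geq\tau B$, after at most $\lceil\tau^{-1}\rceil$ rounds the modulus reaches $p^B$, and only then does one land in $U_{t,1}^{B,B}(\bm{\mathfrak{a}})$ via H\"older. This iteration is the mechanism that lets the proof cope with $\tau$-spaced systems, and it is the main thing missing from your outline.

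Your flagged ``main obstacle'' about the per-digit multiplicity is a reasonable thing to worry about, but the proposed resolution (interleaving the lifting with the weight structure, applying $E_t^*$ only at scales where sums ``genuinely grow'') is not what the paper does and is not needed. The paper's Lemma~\ref{diaglemma} applies the $E_t^*$ bound digit by digit to the multiplicity $\#\mathcal{A}_t(\bm{n})=\prod_r\#\{\bm{u}\in A_p^t:\sum_i u_i=n_r\}\ll\prod_r n_r^\epsilon$ and absorbs the result into a $p^\epsilon$ (for the full bookkeeping, including the Cauchy step that removes the carry-vector $\bm{\lambda}$, see \cite[Lemma~2.2]{ellipsephic2}); the lifting lemma is then invoked only $O(1/\tau)$ times, and the accumulated constant is compared against $q^{B\epsilon}$, which has ample room because $\epsilon$ in the hierarchy is permitted to be tiny relative to $\tau$ and to $\log q/\log p$. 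So the bound you want does not come from the weight structure of $S_\xi$ and $\rho_B(\xi)$ at all, but purely from the multiplicative structure of the digit-sum count and the boundedness of the iteration length.
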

\begin{proof}
Let $0<\tau<1$, and let $B\in\mathbb{N}$ be sufficiently large in terms of $\tau$. Fix any $\bm{\mathfrak{a}}\in\mathbb{D}$ and any $\phi\in\Phi_{\tau}(B)$, so that by definition we have $\phi(z)=z+p^c \psi(z)$ for some $c\geq\tau B$ and some $\psi\in\mathbb{Z}[z]$. Then $U_{t,1}^{B}(\bm{\mathfrak{a}})$ counts solutions to the congruence
\begin{equation}\label{5.1}
\sum_{i=1}^t\big(\phi(x_i)-\phi(y_i)\big)\equiv 0\mmod{p^B}
\end{equation}
with $\bm{x},\bm{y}\in\mathcal{E}^t$, and where each solution is counted with weight $\rho_0^{-2t}\mathfrak{a}_{\bm{x}}{\mathfrak{a}_{\bm{y}}}$. We may rewrite (\ref{5.1}) in the form
\begin{equation}\label{5.2}
\sum_{i=1}^t\big(x_i+p^c\psi(x_i)\big)\equiv \sum_{i=1}^t\big(y_i+p^c\psi(y_i)\big)\mmod{p^B},
\end{equation}
allowing us to deduce that
\begin{equation}\label{c1cong}
\sum_{i=1}^t x_i\equiv \sum_{i=1}^t y_i \mmod{p^{c_1}},
\end{equation}
where we write $c_1=\min\{B,c\}$. This is effectively a ``free'' condition which was already contained in our original congruence (\ref{5.1}).

We now recall a slightly simplified form of a definition which appeared in \cite{ellipsephic2}.
For $d\in\mathbb{N}$, and for weights $\bm{\mathfrak{b}}$ with $\abs{\mathfrak{b}_x}\leq 1$ for all $x\in\mathcal{E}$ and $0<\sum_{x\in\mathcal{E}}\abs{\mathfrak{b}_x}<\infty$, we define
\begin{equation*}
G_{d}(\bm{\mathfrak{b}})=\oint_{p^{d}}\bigg|\sum_{\substack{\bm{x}\in\mathcal{E}^t}}\mathfrak{b}_{\bm{x}}e\big(\beta(x_1+\dots+x_t)\big)\bigg|^2\,d\beta,
\end{equation*}
which counts solutions to the congruence
\begin{equation}\label{dcong}
\sum_{i=1}^t x_i\equiv \sum_{i=1}^t y_i \mmod{p^{d}},
\end{equation}
with weights $\mathfrak{b}_{\bm{x}}\overline{\mathfrak{b}_{\bm{y}}}$. (In the notation of \cite{ellipsephic2}, this is essentially $G_{0,d}(\bm{0},\bm{\mathfrak{b}})$.)
The following lemma is effectively a special case of  \cite[Lemma 2.2]{ellipsephic2}, which provides the key ``lifting'' step of the process, in which we make use of the $E_t^*$ property of our digit set to raise the power of $p$ used in our congruences. We present an outline of the proof here for completeness, and note that further details are available in \cite{ellipsephic2}.

\begin{lemma}\label{diaglemma}
We have
\begin{equation*}
G_{d}(\bm{\mathfrak{b}})\ll (tp)^{d\epsilon}\sum_{\bm{u}\in\mathcal{E}(p^d)^t}\Big|\sum_{\substack{\bm{x}\in\mathcal{E}^t\\ \bm{x}\equiv\bm{u}\mmod{p^d}}}\mathfrak{b}_{\bm{x}}\,\Big|^2.
\end{equation*}
\end{lemma}
\begin{proof}
As in \cite[Lemma 2.2]{ellipsephic2}, we write 
 \begin{equation*}
 x_i=\sum_{r\geq 0} x_i^{(r)}p^r \mbox{ and } y_i=\sum_{r\geq 0} y_i^{(r)}p^r,
 \end{equation*}
with $x_i^{(r)},y_i^{(r)}\in A_p$ for $1\leq i\leq t$, and bound the number of solutions to (\ref{dcong}) by considering each base $p$ digit in turn. For $h\in\mathbb{Z}$, let 
\begin{equation*}
\mathcal{A}_t(h)=\bigg\{\bm{u}\in A_p^{t}\biggm| \sum_{i=1}^t u_i=h\bigg\},
\end{equation*}
and
\begin{equation*}
\widetilde{\mathcal{A}}_t(h)=\bigg\{(\bm{u},\bm{v})\in A_p^{2t}\biggm| \sum_{i=1}^t (u_i-v_i)=h\bigg\}.
\end{equation*}
Summing the digits of our variables from lowest to highest, we see that a solution of (\ref{dcong}) satisfies
\begin{equation*}
(\bm{x}^{(r)},\bm{y}^{(r)})\in\widetilde{\mathcal{A}}_t(\lambda_r p-\lambda_{r-1}),\quad (0\leq r\leq d-1)
\end{equation*}
for some $1-t\leq \lambda_{0},\dots,\lambda_{d-1}\leq t-1$ reflecting the potential carry-over in our addition, and where we have written $\lambda_{-1}=0$ for convenience. For such a tuple  $\bm{\lambda}=(\lambda_0,\dots,\lambda_{d-1})$, we write 
\begin{equation*}
\bm{\lambda'}=(\lambda_0 p-\lambda_{-1},\dots,\lambda_{d-1} p-\lambda_{d-2}).
\end{equation*}
For brevity, we use the notation $\underline{\bm{u}}$ to denote the tuple $(\bm{u}^{(0)},\dots,\bm{u}^{(d-1)})$,
 and we write
\begin{equation*}
\mathcal{A}_t(\bm{h})=\bigg\{\underline{\bm{u}}\in A_p^{td}\biggm| \bm{u}^{(r)}\in{\mathcal{A}}_t(h_r)\mbox{ for }0\leq r\leq d-1\bigg\}
\end{equation*}
and
\begin{equation*}
\widetilde{\mathcal{A}}_t(\bm{h})=\bigg\{(\underline{\bm{u}},\underline{\bm{v}})\in A_p^{2td}\biggm| (\bm{u}^{(r)},\bm{v}^{(r)})\in\widetilde{\mathcal{A}}_t(h_r)\mbox{ for }0\leq r\leq d-1\bigg\}.
\end{equation*}
We observe that these are the sets of all possible variables with given digit sums, and that any solution of (\ref{dcong}) lies in $\widetilde{\mathcal{A}}_t(\bm{\lambda'})$. Using this notation, we may write
\begin{align*}
G_{d}(\bm{\mathfrak{b}})= \sum_{\bm{\lambda}\in\{1-t,\dots,t-1\}^{d}} \sum_{(\underline{\bm{u}},\underline{\bm{v}})\in\widetilde{\mathcal{A}}_t(\bm{\lambda'})}\sum_{\substack{\bm{x},\bm{y}\in\mathcal{E}^t\\ (\bm{x},\bm{y})\equiv(\bm{u},\bm{v})\mmod{p^d}}}\mathfrak{b}_{\bm{x}}\overline{\mathfrak{b}_{\bm{y}}}.
\end{align*}
Rearranging and applying the triangle inequality and Cauchy's inequality, we remove the dependence on $\bm{\lambda}$ to deduce that
\begin{align*}
G_{d}(\bm{\mathfrak{b}})
&\ll \sum_{(\underline{\bm{u}},\underline{\bm{v}})\in\widetilde{\mathcal{A}}_t(\bm{0})}\sum_{\substack{\bm{x},\bm{y}\in\mathcal{E}^t\\ (\bm{x},\bm{y})\equiv(\bm{u},\bm{v})\mmod{p^d}}}\mathfrak{b}_{\bm{x}}\overline{\mathfrak{b}_{\bm{y}}}\\
&\ll \sum_{0\leq\bm{n}\leq t(p-1)}\bigg(\sum_{\underline{\bm{u}}\in\mathcal{A}_t(\bm{n})}\Big|\sum_{\substack{\bm{x}\in\mathcal{E}^t\\ \bm{x}\equiv\bm{u}\mmod{p^d}}}\mathfrak{b}_{\bm{x}}\,\Big|^2\bigg)\bigg(\sum_{\underline{\bm{u}}\in\mathcal{A}_t(\bm{n})}1\bigg).
\end{align*}
From our initial assumption that $\mathcal{E}$ is a $(p,t)^*$-ellipsephic set, we know that for $\bm{n}=(n_0,\dots,n_{d-1})$ with $0\leq\bm{n}\leq t(p-1)$, we have
\begin{equation*}
\#\mathcal{A}_t(\bm{n})=\#\bigg\{\underline{\bm{u}}\in A_p^{td}\biggm| \sum_{i=1}^t u_i^{(r)}=n_r\mbox{ for }0\leq r\leq d-1\bigg\}
\ll\prod_{r=0}^{d-1} n_r^{\epsilon}\ll (tp)^{d\epsilon},
\end{equation*}
and consequently
\begin{align*}
G_{d}(\bm{\mathfrak{b}})&\ll (tp)^{d\epsilon}\sum_{\bm{u}\in\mathcal{E}(p^d)^t}\Big|\sum_{\substack{\bm{x}\in\mathcal{E}^t\\ \bm{x}\equiv\bm{u}\mmod{p^d}}}\mathfrak{b}_{\bm{x}}\,\Big|^2,
\end{align*}
as claimed. \qedhere
\end{proof}

\par\noindent\textit{Proof of Proposition \ref{Lemma5.1} (continued).}
We now fix the weights $\bm{\mathfrak{b}}$ appearing in Lemma \ref{diaglemma} to be
\begin{equation*}
\mathfrak{b}_x=\rho_0^{-1}\mathfrak{a}_x e(\alpha\phi(x)).
\end{equation*}
Then $G_{c_1}(\bm{\mathfrak{b}})$ encodes the number of solutions to (\ref{c1cong}), counted with weights $\rho_0^{-2t}\mathfrak{a}_{\bm{x}}{\mathfrak{a}_{\bm{y}}} e\Big(\alpha\sum_{i=1}^t\big(\phi(x_i)-\phi(y_i)\big)\Big)$, and consequently we may insert the condition (\ref{c1cong}) into our original congruence in the form
\begin{align*}
U_{t,1}^{B}(\bm{\mathfrak{a}})&=\oint_{p^B}\bigg|\rho_0^{-1}\sum_{x\in\mathcal{E}}\mathfrak{a}_x e(\alpha \phi(x))\bigg|^{2t}\,d\alpha\\
&= \oint_{p^B}G_{c_1}(\bm{\mathfrak{b}})\,d\alpha.
\end{align*}
By Lemma \ref{diaglemma}, we have
\begin{align*}
U_{t,1}^{B}(\bm{\mathfrak{a}})&\ll (tp)^{c_1\epsilon}\sum_{\substack{\bm{u}\in\mathcal{E}(p^{c_1})^t}}\oint_{p^B}\Big|\sum_{\substack{\bm{x}\in\mathcal{E}^t\\ \bm{x}\equiv\bm{u}\mmod{p^{c_1}}}}\mathfrak{b}_{\bm{x}}\,\Big|^2\,d\alpha,
\end{align*}
where the integrand on the right-hand side now imposes the condition $\bm{x}\equiv\bm{y}\equiv\bm{u}\mmod{p^{c_1}}$. The fact that $p^{c_1}$ divides $x_i-y_i$ implies that $p^{c_1}$ divides $\psi(x_i)-\psi(y_i)$ for $1\leq i\leq t$, and substituting this into (\ref{5.2}) gives the congruence
\begin{equation*}
\sum_{i=1}^t x_i\equiv \sum_{i=1}^t y_i \mmod{p^{c_2}},
\end{equation*}
where $c_2=\min\{2c,B\}$. Repeating this process, we eventually reach the point at which our congruence holds modulo $p^{c_j}$ with $c_j=\min\{jc,B\}=B$, and since $c\geq\tau B$, this happens after at most $\lceil\tau^{-1}\rceil$ steps. Now
\begin{align*}
U_{t,1}^{B}(\bm{\mathfrak{a}})&\ll (tp)^{B\epsilon}\sum_{\substack{\bm{u}\in\mathcal{E}(p^{B})^t}} \oint_{p^B}\Big|\sum_{\substack{\bm{x}\in\mathcal{E}^t\\ \bm{x}\equiv\bm{u}\mmod{p^{B}}}}\mathfrak{b}_{\bm{x}}\,\Big|^2\,d\alpha,
\end{align*}
so returning to the definition of the weights $\bm{\mathfrak{b}}$, we obtain 
\begin{align*}
U_{t,1}^{B}(\bm{\mathfrak{a}})
&\ll p^{2B\epsilon}\rho_0^{-2t}\sum_{\substack{\bm{u}\in\mathcal{E}(p^{B})^t}}\oint_{p^B}\bigg|\sum_{\substack{\bm{x}\in\mathcal{E}^t\\\bm{x}\equiv\bm{u}\mmod{p^{B}}}}\mathfrak{a}_{\bm{x}} e\Big(\alpha\sum_{i=1}^t\phi(x_i)\Big)\bigg|^{2}\,d\alpha\\
&= p^{2B\epsilon}\rho_0^{-2t}\sum_{\substack{\bm{u}\in\mathcal{E}(p^{B})^t}}\oint_{p^B}\bigg|\prod_{i=1}^t\rho_B(u_i)f_B({\alpha},u_i)\bigg|^{2}\,d\alpha.
\end{align*}
Using H\"older's inequality twice, we see that
\begin{align*}
U_{t,1}^{B}(\bm{\mathfrak{a}})&\ll p^{2B\epsilon}\rho_0^{-2t}\sum_{\substack{\bm{u}\in\mathcal{E}(p^{B})^t}}\prod_{i=1}^t\rho_B(u_i)^2\bigg(\oint_{p^B}\big|f_B({\alpha},u_i)\big|^{2t}\,d\alpha\bigg)^{1/t}\\
&= p^{2B\epsilon}\rho_0^{-2t}\Bigg(\sum_{\substack{u\in\mathcal{E}(p^{B})}}\rho_B(u)^2\bigg(\oint_{p^B}\big|f_B({\alpha},u)\big|^{2t}\,d\alpha\bigg)^{1/t}\Bigg)^t\\
&\ll p^{2B\epsilon}\rho_0^{-2}\sum_{\substack{u\in\mathcal{E}(p^{B})}}\rho_B(u)^2\oint_{p^B}\big|f_B({\alpha},u)\big|^{2t}\,d\alpha=p^{2B\epsilon} U_{t,1}^{B,B}(\bm{\mathfrak{a}}).
\end{align*}
By adjusting the value of $\epsilon$ as necessary, we may replace $p^{2B\epsilon}$ by $q^{B\epsilon}$, and consequently we deduce that
\begin{align*}
\frac{\log({U_{t,1}^{B}(\bm{\mathfrak{a}})}/ U_{t,1}^{B,B}(\bm{\mathfrak{a}}))}{\log{q^B}}\ll\epsilon
\end{align*}
for any $\epsilon>0$, and hence, using the definition (\ref{lambdadefn}), we find that $\lambda(t,1)=0$ as claimed.
\end{proof}

\section{The hierarchy}\label{hierarchy}
In order to prove Theorem \ref{Thm1.1}, we assume that $\Lambda=\lambda(tk(k+1)/2,k)>0$, and work towards a contradiction. We introduce small positive numbers
\begin{equation}\label{6.2}
0<\epsilon<\tau<\delta<\mu<1,
\end{equation}
which form a hierarchy in the sense that each element is assumed to be small enough in terms of $k,\Lambda$ and the larger parameters in the inequality (\ref{6.2}). We may then choose $B$ large enough, in terms of all of the above, to ensure that, writing $H=\lceil B/k\rceil$, we have
\begin{equation}\label{Ulowerbd}
U_{s,k}^B(\bm{\mathfrak{a}})\geq (q^H)^{\Lambda-\epsilon}U_{s,k}^{B,H}(\bm{\mathfrak{a}}).
\end{equation}
By Lemma \ref{Lemma4.1}, we may assume that for all $h\in\mathbb{N}_0$ with $h\leq (1-\delta)H$, and for all $\bm{\mathfrak{a'}}\in\mathbb{D}$, we have
\begin{equation*}
U_{s,k}^{B,h}(\bm{\mathfrak{a'}})\leq (q^{H-h})^{\Lambda+\epsilon}U_{s,k}^{B,H}(\bm{\mathfrak{a'}}).
\end{equation*}
We fix parameters
\begin{align}\label{nutheta}
\nu=\lceil 4\epsilon H\Lambda^{-1}\rceil \quad\mbox{ and }\quad \theta=\lceil{\mu H}\rceil
\end{align}
for use in the remainder of the paper, and observe that the existence of $\nu$ is dependent on our assumption that $\Lambda>0$. We also suppose that we have chosen $\epsilon$ sufficiently small to ensure that $\nu<\theta$. The following lemmata provide bounds for $U_{s,k}^B(\bm{\mathfrak{a}})$ which allow us to initiate our iterative process in Section \ref{PfThm3.1}.
\begin{lemma}\label{Lemma6.1}
We have $U_{s,k}^B({\bm{\mathfrak{a}}})\ll q^{s\nu} K_{\nu,\nu,c}^{1,\bm{\phi},\nu}(\bm{\mathfrak{a}})$.
\end{lemma}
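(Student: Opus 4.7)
The plan is to decompose $f(\bm{\alpha})$ along the residue classes of $\mathcal{E}$ modulo $p^\nu$ and apply two coordinated Cauchy--Schwarz/Jensen estimates that produce a double sum over independent indices $\xi,\eta\in\mathcal{E}(p^\nu)$, from which the pairs satisfying $\xi\not\equiv\eta\mmod{p^\nu}$ yield precisely the desired quantity $K_{\nu,\nu,c}^{1,\bm{\phi},\nu}(\bm{\mathfrak{a}})$.

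Starting from the decomposition $f(\bm{\alpha}) = \rho_0^{-1}\sum_{\xi\in\mathcal{E}(p^\nu)}\rho_\nu(\xi)f_\nu(\bm{\alpha},\xi)$, which follows at once from the definition of $f_\nu$, one application of Cauchy--Schwarz combined with $\#\mathcal{E}(p^\nu)\leq q^\nu$ gives
\begin{equation*}
|f(\bm{\alpha})|^2 \leq q^\nu\rho_0^{-2}\sum_{\xi\in\mathcal{E}(p^\nu)}\rho_\nu(\xi)^2|f_\nu(\bm{\alpha},\xi)|^2.
\end{equation*}
Raising this inequality to the $t$-th power and invoking Jensen's inequality with respect to the probability weights $\rho_\nu(\xi)^2/\rho_0^2$ and the convex function $z\mapsto z^t$ (recall $t\geq 2$) yields
\begin{equation*}
|f(\bm{\alpha})|^{2t} \leq q^{t\nu}\rho_0^{-2}\sum_{\xi}\rho_\nu(\xi)^2|f_\nu(\bm{\alpha},\xi)|^{2t},
\end{equation*}
with an analogous bound for $|f(\bm{\alpha})|^{2(s-t)}$ using a fresh index $\eta$. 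Writing $|f|^{2s}=|f|^{2t}|f|^{2(s-t)}$, multiplying the two bounds, and integrating over $\oint_{p^B}$ produces
\begin{equation*}
U_{s,k}^B(\bm{\mathfrak{a}}) \ll q^{s\nu}\rho_0^{-4}\sum_{\xi,\eta\in\mathcal{E}(p^\nu)}\rho_\nu(\xi)^2\rho_\nu(\eta)^2\oint_{p^B}|f_\nu(\bm{\alpha},\xi)|^{2t}|f_\nu(\bm{\alpha},\eta)|^{2(s-t)}\,d\bm{\alpha}.
\end{equation*}
The contribution from pairs with $\xi\not\equiv\eta\mmod{p^\nu}$ is precisely $q^{s\nu}K_{\nu,\nu,c}^{1,\bm{\phi},\nu}(\bm{\mathfrak{a}})$, noting that $R=t$ when $r=1$.

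The main obstacle is the diagonal contribution coming from $\xi=\eta$. Using $\rho_\nu(\xi)^4\leq\rho_0^2\rho_\nu(\xi)^2$, this diagonal is bounded above by $q^{s\nu}U_{s,k}^{B,\nu}(\bm{\mathfrak{a}})$. Invoking Lemma~\ref{Lemma4.1} with $h=\nu\leq(1-\delta)H$ (valid by the hierarchy \eqref{6.2} and the choice of $\nu$), one obtains $U_{s,k}^{B,\nu}\ll(q^{H-\nu})^{\Lambda+\epsilon}U_{s,k}^{B,H}(\bm{\mathfrak{a}})$, which when combined with the lower bound \eqref{Ulowerbd} gives a bound for the diagonal in terms of $U_{s,k}^B(\bm{\mathfrak{a}})$ itself. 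The calibration $\nu=\lceil 4\epsilon H\Lambda^{-1}\rceil$ in \eqref{nutheta} is engineered precisely so that this diagonal contribution is dominated by the left-hand side, after which rearrangement leaves the off-diagonal $q^{s\nu}K_{\nu,\nu,c}^{1,\bm{\phi},\nu}(\bm{\mathfrak{a}})$ as the governing term, establishing the claimed inequality.
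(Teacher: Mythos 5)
Your decomposition of $f(\bm{\alpha})$, the Cauchy--Schwarz and Jensen steps, and the identification of the off-diagonal as $q^{s\nu}K_{\nu,\nu,c}^{1,\bm{\phi},\nu}(\bm{\mathfrak{a}})$ are all correct. The gap is in the treatment of the diagonal.

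Because you apply Cauchy--Schwarz and Jensen to the \emph{full} sums over $\xi$ and $\eta$ before splitting off $\xi=\eta$, the diagonal term inherits the same $q^{s\nu}$ loss as the off-diagonal, yielding $q^{s\nu}U_{s,k}^{B,\nu}(\bm{\mathfrak{a}})$ rather than $U_{s,k}^{B,\nu}(\bm{\mathfrak{a}})$. That extra factor cannot be absorbed. Concretely, combining Lemma~\ref{Lemma4.1} with \eqref{Ulowerbd} gives
\begin{equation*}
\frac{q^{s\nu}U_{s,k}^{B,\nu}(\bm{\mathfrak{a}})}{U_{s,k}^{B}(\bm{\mathfrak{a}})}\ll q^{\,s\nu+(H-\nu)(\Lambda+\epsilon)-H(\Lambda-\epsilon)}=q^{\,s\nu+2\epsilon H-\nu(\Lambda+\epsilon)},
\end{equation*}
and since $\nu=\lceil 4\epsilon H\Lambda^{-1}\rceil$, so $\nu\approx 4\epsilon H/\Lambda$, the exponent is approximately $4\epsilon H(s/\Lambda-1)+2\epsilon H-4\epsilon^{2}H/\Lambda$. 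Because $\Lambda=\lambda(s,k)\leq s$, the first summand is nonnegative and the exponent is at least roughly $2\epsilon H$, which is large and \emph{positive}. In other words $q^{s\nu}U_{s,k}^{B,\nu}(\bm{\mathfrak{a}})$ does not become a small fraction of $U_{s,k}^{B}(\bm{\mathfrak{a}})$; it dominates it. So the rearrangement you describe in the final paragraph does not go through. The choice of $\nu$ in \eqref{nutheta} is calibrated so that $U_{s,k}^{B,\nu}\ll q^{-2\epsilon H}U_{s,k}^{B}$, as the paper records, and absorption is only possible if the diagonal carries \emph{no} $q^{s\nu}$ factor.

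The correct intermediate inequality, as in \cite[Lemma~6.1]{NEC}, is $U_{s,k}^{B}(\bm{\mathfrak{a}})\ll U_{s,k}^{B,\nu}(\bm{\mathfrak{a}})+q^{s\nu}K_{\nu,\nu,c}^{1,\bm{\phi},\nu}(\bm{\mathfrak{a}})$. To obtain it one must separate the diagonal contribution \emph{before} incurring the H\"older cost: the solutions counted by $U_{s,k}^B(\bm{\mathfrak{a}})$ in which all $2s$ variables lie in a common residue class $\xi\pmod{p^\nu}$ contribute at most $U_{s,k}^{B,\nu}(\bm{\mathfrak{a}})$ by a direct weight comparison (since $\rho_\nu(\xi)\leq\rho_0$ and $2-2s\leq 0$), and only the remaining, genuinely mixed, configurations are estimated by H\"older, producing the $q^{s\nu}$ factor on $K_{\nu,\nu,c}^{1,\bm{\phi},\nu}(\bm{\mathfrak{a}})$ alone.
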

\begin{proof}
As in \cite[Lemma 6.1]{NEC}, we use the definitions and H\"older's inequality to obtain
\begin{align*}
U_{s,k}^B({\bm{\mathfrak{a}}})\ll U_{s,k}^{B,\nu}({\bm{\mathfrak{a}}})+q^{s\nu} K_{\nu,\nu,c}^{1,\bm{\phi},\nu}(\bm{\mathfrak{a}}).
\end{align*}
By Lemma \ref{Lemma4.1}, and using (\ref{nutheta}), we have
\begin{align*}
U_{s,k}^{B,\nu}({\bm{\mathfrak{a}}})&\ll (q^{H-\nu})^{\Lambda+\epsilon}U_{s,k}^{B,H}({\bm{\mathfrak{a}}})\\
&\ll q^{-2\epsilon H} (q^{H})^{\Lambda-\epsilon}U_{s,k}^{B,H}({\bm{\mathfrak{a}}}),
\end{align*}
and by (\ref{Ulowerbd}), this implies
\begin{align*}
U_{s,k}^{B,\nu}({\bm{\mathfrak{a}}})&\ll q^{-2\epsilon H} U_{s,k}^{B}({\bm{\mathfrak{a}}}),
\end{align*}
so that
\begin{align*}
U_{s,k}^{B}({\bm{\mathfrak{a}}})\ll q^{s\nu} K_{\nu,\nu,c}^{1,\bm{\phi},\nu}(\bm{\mathfrak{a}})
\end{align*}
as claimed.
\end{proof}

\begin{lemma}\label{Lemma6.2}
For $a,b\in\mathbb{N}_0$ with $a\leq b$, and for $w>0$ and $\xi\in\mathcal{E}$, we have
\begin{align*}
\rho_{a}(\xi)^2 \left|f_{a}(\bm{\alpha},\xi)\right|^{2w}\leq q^{w(b-a)}\sum_{\substack{\zeta\in\mathcal{E}(p^{b})\\ \zeta\equiv\xi\mmod{p^a}}}\rho_{b}(\zeta)^2 \left|f_{b}(\bm{\alpha},\zeta)\right|^{2w}.
\end{align*}
\end{lemma}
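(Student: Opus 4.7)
\textit{Plan.} My plan is to apply Cauchy--Schwarz followed by Jensen's inequality, working with the natural probability measure $\mu_\zeta = \rho_b(\zeta)^2/\rho_a(\xi)^2$ induced by the weights. The starting point is the identity
\begin{equation*}
\rho_a(\xi) f_a(\bm{\alpha};\xi) = \sum_{\substack{\zeta\in\mathcal{E}(p^b)\\ \zeta\equiv\xi\mmod{p^a}}} \rho_b(\zeta) f_b(\bm{\alpha};\zeta),
\end{equation*}
which follows immediately from partitioning the sum defining $f_a$ in (\ref{fadefn}) by residue classes modulo $p^b$. Since $\sum_\zeta \rho_b(\zeta)^2 = \rho_a(\xi)^2$, as observed in Section \ref{genPrelim}, the numbers $\mu_\zeta$ form a probability distribution on the set of relevant $\zeta$, and the identity rewrites as $f_a(\bm{\alpha};\xi) = \sum_\zeta \sqrt{\mu_\zeta}\, f_b(\bm{\alpha};\zeta)$.

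Applying Cauchy--Schwarz to this identity yields
\begin{equation*}
|f_a(\bm{\alpha};\xi)|^2 \leq N \sum_\zeta \mu_\zeta |f_b(\bm{\alpha};\zeta)|^2,
\end{equation*}
where $N$ counts those $\zeta\in\mathcal{E}(p^b)$ with $\zeta\equiv\xi\mmod{p^a}$. A short counting argument based on the base-$p$ expansion, in which the bottom $a$ digits of $\zeta$ are forced by $\xi$ and the remaining $b-a$ digits range freely in $A_p$, gives $N \leq q^{b-a}$. Raising the previous inequality to the $w$th power and invoking Jensen's inequality against the convex function $t\mapsto t^w$ and the probability measure $\mu$, I would then obtain
\begin{equation*}
|f_a|^{2w} \leq N^w \Big(\sum_\zeta \mu_\zeta |f_b|^2\Big)^w \leq N^w \sum_\zeta \mu_\zeta |f_b|^{2w}.
\end{equation*}
Multiplying through by $\rho_a(\xi)^2$ and substituting $\mu_\zeta = \rho_b(\zeta)^2/\rho_a(\xi)^2$ produces the stated inequality, since $N^w \leq q^{w(b-a)}$.

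The main subtlety I expect is extracting the sharp exponent $q^{w(b-a)}$ rather than the weaker $q^{(2w-1)(b-a)}$ which would result from a naive Hölder applied to $\sum_\zeta \rho_b |f_b|$ followed by the crude estimate $\rho_b^{2w} \leq \rho_b^2 \rho_a^{2w-2}$. The Cauchy--Schwarz-then-Jensen route circumvents this inefficiency precisely because both steps are tuned to the probability measure $\mu_\zeta$: the normalising factor $\rho_a(\xi)^{-2}$ hidden inside $\mu_\zeta$ cancels exactly against the outer $\rho_a(\xi)^2$ reintroduced at the final step, leaving only the factor $N^w$ from the Cauchy--Schwarz application.
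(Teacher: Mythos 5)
Your proposal is correct and takes essentially the same approach as the paper: the paper invokes a single three-factor H\"older inequality from \cite[Lemma 6.2]{NEC}, and your Cauchy--Schwarz step followed by Jensen with respect to the probability weights $\mu_\zeta$ is precisely a two-step factoring of that same H\"older application. The one caveat is that the Jensen step needs $w\geq 1$ for convexity of $t\mapsto t^w$ (as does the paper's H\"older argument), but since every use of this lemma in the paper has $w\geq 1$, this is harmless.
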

\begin{proof}
Apply H\"older's inequality exactly as in \cite[Lemma 6.2]{NEC}.
\end{proof}

\begin{lemma}\label{Lemma6.3}
We have $U_{s,k}^B({\bm{\mathfrak{a}}})\ll q^{s\theta} K_{\theta,\theta,c}^{1,\bm{\phi},\nu}(\bm{\mathfrak{a}})$.
\end{lemma}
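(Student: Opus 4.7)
The plan is to combine Lemma \ref{Lemma6.1} with Lemma \ref{Lemma6.2} in order to lift the two conditioning indices from level $\nu$ up to level $\theta$, incurring an explicit cost of $q^{s(\theta-\nu)}$ which matches the discrepancy between the two prefactors.

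First, I would invoke Lemma \ref{Lemma6.1} to obtain
\begin{equation*}
U_{s,k}^{B}(\bm{\mathfrak{a}})\ll q^{s\nu}K_{\nu,\nu,c}^{1,\bm{\phi},\nu}(\bm{\mathfrak{a}}).
\end{equation*}
Recall that with $r=1$ we have $R=t(1)(2)/2=t$, so the integrand appearing in $K_{\nu,\nu,c}^{1,\bm{\phi},\nu}$ factorises as $|f_\nu(\bm{\alpha},\xi)|^{2t}|f_\nu(\bm{\alpha},\eta)|^{2s-2t}$. Applying Lemma \ref{Lemma6.2} pointwise with $a=\nu$, $b=\theta$ and the exponent $w=t$ to the first factor, and with $w=s-t$ to the second factor, yields
\begin{equation*}
\rho_{\nu}(\xi)^2\left|f_{\nu}(\bm{\alpha},\xi)\right|^{2t}\leq q^{t(\theta-\nu)}\sum_{\substack{\zeta\in\mathcal{E}(p^{\theta})\\ \zeta\equiv\xi\mmod{p^{\nu}}}}\rho_{\theta}(\zeta)^2\left|f_{\theta}(\bm{\alpha},\zeta)\right|^{2t}
\end{equation*}
and the analogous bound for $\eta$ with exponent $s-t$.

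Multiplying these two inequalities, summing over $\xi,\eta\in\mathcal{E}(p^{\nu})$ with $\xi\not\equiv\eta\mmod{p^{\nu}}$, and integrating over $\bm{\alpha}$ with the measure $\oint_{p^B}\,d\bm{\alpha}$, the two auxiliary factors combine to $q^{s(\theta-\nu)}$. The crucial structural observation is that whenever $\zeta\equiv\xi\mmod{p^{\nu}}$, $\zeta'\equiv\eta\mmod{p^{\nu}}$ and $\xi\not\equiv\eta\mmod{p^{\nu}}$, one automatically has $\zeta\not\equiv\zeta'\mmod{p^{\nu}}$; conversely every pair $(\zeta,\zeta')\in\mathcal{E}(p^{\theta})^2$ with $\zeta\not\equiv\zeta'\mmod{p^{\nu}}$ arises from a unique such pair $(\xi,\eta)$ via reduction mod $p^{\nu}$. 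Collapsing the four sums therefore gives exactly
\begin{equation*}
K_{\nu,\nu,c}^{1,\bm{\phi},\nu}(\bm{\mathfrak{a}})\leq q^{s(\theta-\nu)}K_{\theta,\theta,c}^{1,\bm{\phi},\nu}(\bm{\mathfrak{a}}).
\end{equation*}

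Chaining this with Lemma \ref{Lemma6.1} produces the claimed bound $U_{s,k}^{B}(\bm{\mathfrak{a}})\ll q^{s\theta}K_{\theta,\theta,c}^{1,\bm{\phi},\nu}(\bm{\mathfrak{a}})$. There is no serious obstacle here; the only point deserving care is the bookkeeping on the non-congruence condition $\xi\not\equiv\eta\mmod{p^{\nu}}$, which is preserved under refinement precisely because the modulus $p^{\nu}$ is not increased when passing from $\xi,\eta$ to $\zeta,\zeta'$ at level $\theta$. This is why the definition of $K_{\cdot,\cdot,c}^{1,\bm{\phi},\nu}$ decouples the conditioning moduli $a,b$ from the separation modulus $\nu$.
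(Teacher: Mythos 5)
Your proposal is correct and follows the same route as the paper: Lemma \ref{Lemma6.1} gives the initial bound at level $\nu$, then two applications of Lemma \ref{Lemma6.2} (with $w=t$ and $w=s-t$, the two exponents summing to $s$) lift both conditioning indices from $\nu$ to $\theta$ at a total cost of $q^{s(\theta-\nu)}$, and the two prefactors telescope to $q^{s\theta}$. Your remark on the stability of the $\xi\not\equiv\eta\pmod{p^{\nu}}$ condition under refinement of the congruence classes, and why the fixed separation modulus $\nu$ makes this work, correctly fills in the one bookkeeping point the paper leaves implicit.
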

\begin{proof}
Apply Lemma \ref{Lemma6.2} twice, as in \cite[Lemma 6.3]{NEC}, to obtain
\begin{align*}
 K_{\nu,\nu,c}^{1,\bm{\phi},\nu}(\bm{\mathfrak{a}})\ll q^{s(\theta-\nu)} K_{\theta,\theta,c}^{1,\bm{\phi},\nu}(\bm{\mathfrak{a}}),
\end{align*}
and substitute this into Lemma \ref{Lemma6.1} to see that
\begin{align*}
U_{s,k}^B({\bm{\mathfrak{a}}})\ll q^{s\nu}q^{s(\theta-\nu)} K_{\theta,\theta,c}^{1,\bm{\phi},\nu}(\bm{\mathfrak{a}}) = q^{s\theta} K_{\theta,\theta,c}^{1,\bm{\phi},\nu}(\bm{\mathfrak{a}})
\end{align*}
as required.
\end{proof}

\section{The iterative process}\label{iteration}
Let $k\geq 2$, and suppose that Theorem \ref{Thm3.1} holds for exponents smaller than $k$. In this section, we make use of the inductive hypothesis and provide the key lemmata underlying our iterative process, before completing the proof of the theorem in Section \ref{PfThm3.1}. We begin with a lemma which raises the power of $p$ involved in one of our congruences, at a small cost.
\begin{lemma}\label{Lemma7.1}
Let $a,b,r\in\mathbb{N}$ with $1\leq r\leq k-1$ and $\min\{a,b\}\geq \delta\theta$. Suppose that
\begin{equation*}
ra\leq (k-r+1)b\leq B,
\end{equation*}
and set
\begin{equation*}
b'=\lceil (k-r+1)b/r\rceil.
\end{equation*}
Then $K_{a,b,c}^{r,\bm{\phi},\nu}(\bm{\mathfrak{a}})\ll q^{tk^2\nu}K_{b',b,c}^{r,\bm{\phi},\nu}(\bm{\mathfrak{a}})$.
\end{lemma}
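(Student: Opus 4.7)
The plan is to reproduce Wooley's argument for Lemma 7.1 in \cite{NEC} in our ellipsephic setting. The weights $\bm{\mathfrak{a}}$ play no essential role in this step, since the counting is purely $p$-adic; the key inputs are the $p^c$-spacing of $\bm{\phi}$ together with the separation condition $\xi \not\equiv \eta \pmod{p^\nu}$, and the conclusion is an upgrade of the $\xi$-cluster congruence from level $a$ to level $b'$. I do not expect to invoke the inductive hypothesis here—that is reserved for later lemmata in the section.

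Fix a pair $(\xi,\eta)$ contributing to $K_{a,b,c}^{r,\bm{\phi},\nu}(\bm{\mathfrak{a}})$, and translate each cluster by writing $x_i = \xi + p^a z_i$, $y_i = \xi + p^a w_i$, and similarly $u_m = \eta + p^b u_m'$, $v_m = \eta + p^b v_m'$. Using Taylor expansion together with the $p^c$-spacing, each defining congruence $\sum_i(\phi_j(x_i)-\phi_j(y_i)) \equiv \sum_m(\phi_j(u_m)-\phi_j(v_m)) \pmod{p^B}$ becomes
\[
\sum_{l=1}^{j}\binom{j}{l}\xi^{j-l}p^{la}P_l^\xi \equiv \sum_{l=1}^{j}\binom{j}{l}\eta^{j-l}p^{lb}Q_l^\eta \pmod{p^B},
\]
where $P_l^\xi = \sum_{i=1}^{R}(z_i^l - w_i^l)$ and $Q_l^\eta = \sum_{m=1}^{s-R}((u_m')^l - (v_m')^l)$, modulo error terms $O(p^{c+a})$ which are absorbed by taking $c$ large in the sense guaranteed by $\bm{\phi}\in\Phi_\tau(B)$.

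Next I would perform a Vandermonde-type inversion. The combined $k\times 2k$ coefficient matrix is essentially a Vandermonde in $\xi$ and $\eta$, whose relevant minors have determinants divisible by powers of $(\xi-\eta)$; the separation $\xi\not\equiv\eta \pmod{p^\nu}$ then bounds the $p$-adic loss during inversion by $p^{O(k^2\nu)}$, with $p > k$ ensuring that the binomial-coefficient part of the matrix is a $p$-adic unit. After inversion, combined with the hypothesis $ra\leq (k-r+1)b$, the power sums $P_l^\xi$ for $1\leq l\leq r$ are pinned down to prescribed residue classes modulo $p^{b'-a}$ (up to the $p^{O(k^2\nu)}$ slack). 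Newton's identities—valid because $p > k\geq r$ makes $1!,\dots,r!$ units—then convert these power-sum congruences into congruences on the elementary symmetric polynomials of the multisets $\{z_i\}$ and $\{w_i\}$, and hence on the multisets themselves modulo $p^{b'-a}$. Translating back, there is a permutation $\sigma$ with $x_i \equiv y_{\sigma(i)} \pmod{p^{b'}}$.

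Consolidating the solutions by this finer residue class at level $b'$, and absorbing the combinatorial factors (the Vandermonde denominators, the permutation $\sigma$, and the reindexing of $\xi$-classes at level $a$ in terms of classes at level $b'$) into a loss of $q^{tk^2\nu}$, yields the stated bound. The main obstacle will be the bookkeeping through the Vandermonde step: I need to verify that the separation encoded by $\nu$ converts to a uniform $p$-adic loss of exactly the size required, and that after summing over $\sigma$ and the auxiliary residue data the exponent $tk^2$ (rather than something larger) suffices to close the argument.
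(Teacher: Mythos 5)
Your approach diverges from the paper's in a crucial way, and the divergence creates a genuine gap.

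You explicitly state that you "do not expect to invoke the inductive hypothesis here," but the paper's proof of this lemma hinges on it. After the translation $x_i = \xi + p^a z_i$ and the Vandermonde-type inversion exploiting $\xi \not\equiv \eta \pmod{p^\nu}$ (which you carry out correctly), one arrives at a degree-$r$ Vinogradov-type system in the shifted variables: $\sum_{i=1}^R \Psi_l(u_i) \equiv \sum_{i=1}^R \Psi_l(v_i) \pmod{p^{B'}}$ for $1 \leq l \leq r$, where the $\Psi_l$ are $p^c$-spaced with $c > \tau(k-r+1)b$. At this point the paper recognises this as $U_{R,r}^{B'}(\bm{\mathfrak{c}})$ for a suitable auxiliary weight $\bm{\mathfrak{c}}$, and applies Corollary \ref{Cor3.2} --- that is, Theorem \ref{Thm3.1} for the smaller exponent $r < k$ --- to obtain $U_{R,r}^{B'}(\bm{\mathfrak{c}}) \ll q^{B'\epsilon^2} U_{R,r}^{B',H'}(\bm{\mathfrak{c}})$, and then Lemma \ref{Lemma6.2} and bookkeeping finish the job. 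The inductive hypothesis is the engine of this lemma, not a later ingredient.

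Your proposed replacement, namely the Newton's-identities step concluding that there is a permutation $\sigma$ with $x_i \equiv y_{\sigma(i)} \pmod{p^{b'}}$, does not work. The cluster at $\xi$ contains $R = tr(r+1)/2$ pairs $(z_i,w_i)$, not $r$ of them, and you only control the first $r$ power sums $P_l^\xi$, $1 \leq l \leq r$. Newton's identities let you recover an $R$-element multiset from its first $R$ power sums; with $r < R$ you cannot deduce that the multisets $\{z_i\}$ and $\{w_i\}$ agree modulo $p^{b'-a}$, and in general they will not. (Concretely, for $r=1$ and $R=t$ you only control $\sum_i(z_i - w_i)$, which is far from pinning down the $z_i$ individually.) This under-determination is exactly the obstruction that the inductive hypothesis is designed to overcome: instead of diagonality, it delivers the weaker but adequate conclusion that the $\bm{z},\bm{w}$ concentrate in a common residue class modulo $p^{H'}$ at the cost of a factor $q^{B'\epsilon^2}$. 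You also omit the case split on $B' = (k-r+1)b - ra - (k-r)\gamma$ versus $\nu$: when $B' \leq \nu$ the lemma follows directly from Lemma \ref{Lemma6.2} and no congruencing argument is needed, while the congruencing argument above is reserved for $B' > \nu$.
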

\begin{proof} Following \cite[Lemma 7.1]{NEC}, we focus on $K_{a,b,c}^{r,\bm{\phi}}(\bm{\mathfrak{a}};\xi,\eta)$, in which we may assume that $p^{\gamma}\|(\xi-\eta)$ for some $\gamma<\nu$, and write $\xi-\eta = \omega p^{\gamma}$ with $(\omega,p)=1$. We introduce
\begin{equation*}
B'=(k-r+1)b-ra-(k-r)\gamma,
\end{equation*}
and in the case $B'\leq \nu$, we observe that
\begin{equation*}
(k-r+1)b-ra\leq \nu+(k-r)\gamma\leq k\nu.
\end{equation*}
Consequently, we have
\begin{equation*}
b'-a=\lceil(k-r+1)b/r\rceil-a\leq 1+k\nu/r,
\end{equation*}
and may apply Lemma \ref{Lemma6.2} to obtain 
\begin{align*}
\rho_{a}(\xi)^2 \left|f_{a}(\bm{\alpha},\xi)\right|^{2R}\leq q^{R(b'-a)}\sum_{\substack{\zeta\in\mathcal{E}(p^{b'})\\ \zeta\equiv\xi\mmod{p^a}}}\rho_{b'}(\zeta)^2 \left|f_{b'}(\bm{\alpha},\zeta)\right|^{2R}.
\end{align*}
We now have
\begin{align*}
K&_{a,b,c}^{r,\bm{\phi}}(\bm{\mathfrak{a}};\xi,\eta)=\oint_{p^B}\left|f_a(\bm{\alpha},\xi)^{2R}f_b(\bm{\alpha},\eta)^{2s-2R}\right|\,d\bm{\alpha}\\
&\leq \rho_{a}(\xi)^{-2}q^{R(b'-a)}\sum_{\substack{\zeta\in\mathcal{E}(p^{b'})\\ \zeta\equiv\xi\mmod{p^a}}}\rho_{b'}(\zeta)^2  \oint_{p^B}\left|f_{b'}(\bm{\alpha},\zeta)\right|^{2R}\left|f_b(\bm{\alpha},\eta)^{2s-2R}\right|\,d\bm{\alpha},
\end{align*}
and so 
\begin{align*}
K_{a,b,c}^{r,\bm{\phi},\nu}(\bm{\mathfrak{a}})&=\rho_0^{-4}\sum_{\xi\in\mathcal{E}(p^a)}\sum_{\substack{\eta\in\mathcal{E}(p^b)\\ \xi\not\equiv\eta\mmod{p^{\nu}}}}\rho_a(\xi)^2\rho_b(\eta)^2 K_{a,b,c}^{r,\bm{\phi}}(\bm{\mathfrak{a}};\xi,\eta)\\
&\leq \rho_0^{-4}\sum_{\xi\in\mathcal{E}(p^a)}\sum_{\substack{\eta\in\mathcal{E}(p^b)\\ \xi\not\equiv\eta\mmod{p^{\nu}}}}\rho_b(\eta)^2 q^{R(b'-a)}\sum_{\substack{\zeta\in\mathcal{E}(p^{b'})\\ \zeta\equiv\xi\mmod{p^a}}}\rho_{b'}(\zeta)^2  K_{b',b,c}^{r,\bm{\phi}}(\bm{\mathfrak{a}};\zeta,\eta)\\
&=q^{R(b'-a)} K_{b',b,c}^{r,\bm{\phi},\nu}(\bm{\mathfrak{a}}).
\end{align*}
Finally, since $R=tr(r+1)/2\leq trk/2$, we conclude that $R(b'-a)\leq tk^2\nu$, and so $K_{a,b,c}^{r,\bm{\phi},\nu}(\bm{\mathfrak{a}})\ll q^{tk^2\nu}K_{b',b,c}^{r,\bm{\phi},\nu}(\bm{\mathfrak{a}})$ as required.

When $B'>\nu$, we consider the solutions counted by $K_{a,b,c}^{r,\bm{\phi},\nu}(\bm{\mathfrak{a}};\xi,\eta)$ and observe that we may suppose that the polynomials $\phi_j$ within the definition of $K_{a,b,c}^{r,\bm{\phi},\nu}(\bm{\mathfrak{a}};\xi,\eta)$ have the form
 $\phi_j(z)=z^j+p^c z^{k+1}\psi_j(z)$ for some $\psi_j\in\mathbb{Z}[z]$. We may rewrite (\ref{Kcong}) as
\begin{equation*}
\sum_{i=1}^R \big(\phi_j(x_i)-\phi_j(y_i)\big)\equiv\sum_{l=1}^{s-R} \big(\phi_j(p^bw_l+\eta)-\phi_j(p^bz_l+\eta)\big)\mmod{p^B},\quad (1\leq j\leq k).
\end{equation*}
Taking linear combinations of these congruences, we deduce the existence of integer polynomials $\Psi_j$ for which, writing
\begin{align}\label{7.7}
\Phi_j(z)=z^j+p^cz^{k+1}\Psi_j(z),
\end{align}
we have
\begin{align*}
\sum_{i=1}^R   \big(\Phi_j(x_i-\eta) - \Phi_j(y_i-\eta)  \big) \equiv \sum_{l=1}^{s-R} \big(\Phi_j(p^bw_i) - \Phi_j(p^bz_i)  \big) \mmod{p^B}
\end{align*}
for $1\leq j\leq k$. The system $\bm{\Phi}$ is $p^c$-spaced, and we have
\begin{align}\label{7.8}
\sum_{i=1}^R \Phi_j(x_i-\eta) \equiv \sum_{i=1}^R \Phi_j(y_i-\eta) \mmod{(p^{jb},p^B)}.
\end{align}
Recall that the solutions of interest here, namely those counted by $K_{a,b,c}^{r,\bm{\phi}}(\bm{\mathfrak{a}};\xi,\eta)$, satisfy $\bm{x}\equiv\bm{y}\equiv\xi\mmod{p^a}$, so we may write $x_i=p^au_i+\xi$ and $y_i=p^av_i+\xi$, which leads to the observation that
\begin{align*}
x_i-\eta&=p^au_i+\xi-\eta \\&= p^au_i+\omega p^{\gamma},
\end{align*}
and similarly $y_i-\eta = p^av_i+\omega p^{\gamma}$. Substituting this into (\ref{7.8}), and recalling that we have $B\geq (k-r+1)b$, we deduce that for $k-r+1\leq j\leq k$, we have
\begin{align}\label{7.10}
\sum_{i=1}^R \Phi_j (p^au_i+\omega p^{\gamma})\equiv\sum_{i=1}^R \Phi_j (p^av_i+\omega p^{\gamma}) \mmod{p^{(k-r+1)b}}.
\end{align}
Applying the binomial theorem to (\ref{7.7}), we see that for $1\leq i, l\leq r$, there exist polynomials $\Theta_l(z)\in\mathbb{Z}[z]$, and coefficients
\begin{align*}
\Omega_{il}\equiv\binom{k-r+l}{i}\mmod{p^{c}}
\end{align*}
such that, writing
\begin{align*}
\Upsilon_l(z)=\sum_{i=1}^r\Omega_{il}(\omega p^{\gamma})^{k-r+l-i}z^i+z^{r+1}\Theta_l(z),
\end{align*}
we have
\begin{align*}
\Phi_{k-r+l}(p^ay+\omega p^{\gamma})-\Phi_{k-r+l}(\omega p^{\gamma})=\Upsilon_l(p^ay).
\end{align*}
By our assumption that $p>k$, the coefficient matrix ${\Omega}=(\Omega_{il})_{1\leq i,l\leq r}$ has non-zero determinant modulo $p$, as follows:
\begin{align*}
1!\cdots r! \det{{\Omega}}&\equiv \det\Big(\big(k-j+1)\dots(k-j+1-(i-1)\big)\Big)_{1\leq i,j\leq r} \mmod{p}\\
&=\det\big((k-j+1)^i\big)_{1\leq i,j\leq r}\\
&\equiv \bigg(\prod_{l=1}^r (k-l+1)\bigg) \bigg(\prod_{1\leq i<j\leq r}\big((k-j+1)-(k-i+1)\big)\bigg)\\
&\not\equiv 0\mmod{p}.
\end{align*}
Consequently, ${\Omega}$ has an inverse ${\Omega}^{-1}\mmod{p^{(k-r+1)b}}$ with integer coefficients. We take linear combinations of our congruences once again, in order to replace $\bf{\Upsilon}$ and $\bf{\Theta}$ by ${\Omega}^{-1}\bf{\Upsilon}$ and ${\Omega}^{-1}\bf{\Theta}$ respectively. We may therefore suppose, without loss of generality, that ${\Omega}$ is the $r\times r$ identity matrix. Since $(w,p)=1$, there exists $w^{-1}$ with $w^{-1}w\equiv 1\mmod{p^{(k-r+1)b}}$, and therefore polynomials $\Xi_l\in\mathbb{Z}[z]$ such that whenever (\ref{7.10}) holds, we have
\begin{equation*}
(\omega p^{\gamma})^{k-r}\sum_{i=1}^R(p^a)^l \big(\Psi_l(u_i)-\Psi_l(v_i)\big)\equiv 0\mmod{p^{(k-r+1)b}}\quad (1\leq l\leq r),
\end{equation*}
where $\Psi_l(z)=z^l+p^{a-(k-r)\gamma}\Xi_l(z)$.

Our hierarchy (\ref{6.2}) allows us to ensure that
\begin{equation*}
k\gamma < k\nu\leq \delta a,
\end{equation*}
and therefore we have
\begin{equation*}
a-(k-r)\gamma>(1-\delta)a>\tau B,
\end{equation*}
so the system of polynomials $\bm{\Psi}$ is $p^c$-spaced for some $c>\tau(k-r+1)b$, and satisfies
\begin{equation}\label{Psicong}
\sum_{i=1}^R \Psi_l(u_i)\equiv \sum_{i=1}^R\Psi_l(v_i)\mmod{p^{B'}}\quad (1\leq l\leq r).
\end{equation}
Let $H'=\lceil B'/r\rceil$, and note that
\begin{equation*}
b'-H'\leq a+1+(k-r)\gamma/r.
\end{equation*}
Using the definition of $\nu$ in (\ref{nutheta}), and recalling that we are in the case $B'>\nu$, we see that $B'>4\epsilon H\Lambda^{-1}$, which means that our hierarchy (\ref{6.2}) ensures that $B'$ is sufficiently large in terms of the other parameters. This will allow us to apply the inductive hypothesis, after a few further preparatory steps. Observe that we may write
\begin{align*}
f_a(\bm{\alpha},\xi)=\rho_a(\xi)^{-1}\sum_{\substack{x\in\mathcal{E}\\ x\equiv\xi\mmod{p^a}}}\mathfrak{a}_x e\big(\psi(x;\bm{\alpha})\big)=
\rho_a(\xi)^{-1}\sum_{u\in\mathcal{E}}\mathfrak{c}_u(\bm{\alpha}),
\end{align*}
where $\mathfrak{c}_u(\bm{\alpha})=\mathfrak{a}_{p^au+\xi}\,e\big(\psi(p^au+\xi;\bm{\alpha})\big)$, and that
\begin{align}\label{rhoeq}
\rho_0(\bm{\mathfrak{c} })^2=\sum_{u\in\mathcal{E}}\left|\mathfrak{c}_u(\bm{\alpha})\right|^2
=\sum_{u\in\mathcal{E}}\left|\mathfrak{a}_{p^au+\xi}\right|^2=\rho_a(\xi)^2.
\end{align}
Let
\begin{align*}
g_{\bm{\mathfrak{c}}}(\bm{\alpha},\bm{\beta})= \rho_0(\bm{\mathfrak{c} })^{-1}\sum_{u\in\mathcal{E}}\mathfrak{c}_u(\bm{\alpha})e\big(\beta_1\Psi_1(u)+\dots+\beta_r\Psi_r(u)\big)
\end{align*}
and note that
\begin{align*}
\oint_{p^{B'}} \left|g_{\bm{\mathfrak{c}}}(\bm{\alpha},\bm{\beta})\right|^{2R}\,d\bm{\beta}=U_{R,r}^{B'}(\bm{\mathfrak{c}}).
\end{align*}
Note that $g_{\bm{\mathfrak{c}}}(\bm{\alpha},\bm{0})=f_a(\bm{\alpha},\xi)$, and 
 that $U_{R,r}^{B'}(\bm{\mathfrak{c}})$ counts ellipsephic solutions to the system of congruences (\ref{Psicong}), where each solution is counted with weight 
\begin{align*}
\rho_0&(\bm{\mathfrak{c} })^{-2R}\prod_{i=1}^R \mathfrak{c}_{u_i}(\bm{\alpha}) \overline{\mathfrak{c}_{v_i}(\bm{\alpha})}\\
&= \rho_{a}(\xi)^{-2R}\bigg(\prod_{i=1}^R \mathfrak{a}_{p^au_i+\xi} \overline{\mathfrak{a}_{p^av_i+\xi}}\bigg) e\bigg(\sum_{i=1}^R \big(\psi(p^au_i+\xi;\bm{\alpha})-\psi(p^av_i+\xi;\bm{\alpha})\big)\bigg).
\end{align*}
Consequently,
\begin{align*}
\oint_{p^B} \left|f_a(\bm{\alpha},\xi)\right|^{2R} \left|f_b(\bm{\alpha},\eta)\right|^{2s-2R}\,d\bm{\alpha}
=\oint_{p^B} \oint_{p^{B'}} \left|g_{\mathfrak{c}}(\bm{\alpha},\bm{\beta})\right|^{2R} \left|f_b(\bm{\alpha},\eta)\right|^{2s-2R}\,d\bm{\beta}\,d\bm{\alpha},
\end{align*}
and so
\begin{equation} \label{7.18}
K_{a,b,c}^{r,\bm{\phi}}(\bm{\mathfrak{a}};\xi,\eta)=\oint_{p^B}U_{R,r}^{B'}(\bm{\mathfrak{c}}) \left|f_b({\bm{\alpha},\eta)}\right|^{2s-2R}\,d\bm{\alpha}.
\end{equation}
At this point, we apply the inductive hypothesis, in the form of Corollary \ref{Cor3.2}, to deduce that
\begin{align}\label{7.19}
U_{R,r}^{B'}(\bm{\mathfrak{c}}) &\ll q^{B'\epsilon^2}U_{R,r}^{B',H'}(\bm{\mathfrak{c}})\nonumber\\
&=q^{B'\epsilon^2}(\bm{\mathfrak{a}})\rho_0(\bm{\mathfrak{c}})^{-2}\sum_{\zeta\in\mathcal{E}(p^{H'})}\rho_{H'}(\zeta,\bm{\mathfrak{c}})^2\oint_{p^{B'}}\left|g_{\bm{\mathfrak{c}'}}(\bm{\alpha},\bm{\beta})\right|^{2R}\,d\bm{\beta},
\end{align}
where $\mathfrak{c}'_n(\bm{\alpha})=\mathfrak{c}_n(\bm{\alpha})$ whenever $n\in\mathcal{E}$ is congruent to $\zeta$ modulo $p^{H'}$, and zero otherwise. Letting $\kappa=p^a\zeta+\xi$, we see that
\begin{align*}
\rho_{H'}(\zeta,\bm{\mathfrak{c}})^2=\sum_{y\equiv\zeta\mmod{p^{H'}}}\abs{\mathfrak{a}_{p^ay+\xi}}^2
=\sum_{n\equiv\kappa\mmod{p^{a+H'}}}\abs{\mathfrak{a}_{n}}^2= \rho_{a+H'}(\kappa)^2.
\end{align*}
Using this, and recalling (\ref{rhoeq}), we substitute (\ref{7.19}) into (\ref{7.18}) to obtain
\begin{align}\label{7.20}
&\rho_a(\xi)^2 K_{a,b,c}^{r,\bm{\phi}}(\bm{\mathfrak{a}};\xi,\eta)\nonumber\\
&\ll q^{B'\epsilon^2}\hspace{-0.2in}\sum_{\substack{\kappa\in\mathcal{E}(p^{a+H'})\\\kappa\equiv\xi\mmod{p^a}}}\hspace{-0.1in}\rho_{a+H'}(\kappa)^2 \oint_{p^B}\oint_{p^{B'}}\left|g_{\bm{\mathfrak{c}'}}(\bm{\alpha},\bm{\beta})\right|^{2R} \left|f_b({\bm{\alpha},\eta)}\right|^{2s-2R}\,d\bm{\beta}\,d\bm{\alpha}.
\end{align}
We now observe that 
\begin{equation*}
\oint_{p^{B'}}\left|g_{\bm{\mathfrak{c}'}}(\bm{\alpha},\bm{\beta})\right|^{2R} \,d\bm{\beta}
\end{equation*}
counts ellipsephic solutions to the system (\ref{Psicong}) with the additional restriction that $\bm{u}\equiv\bm{v}\equiv\zeta\mmod{p^{H'}}$, and with each solution counted with weight
\begin{align*}
\rho_0&(\bm{\mathfrak{c}'})^{-2R}\prod_{i=1}^R \mathfrak{c}'_{u_i}(\bm{\alpha}) \overline{\mathfrak{c}'_{v_i}(\bm{\alpha})}\\
&= \rho_{a+H'}(\kappa)^{-2R}\bigg(\prod_{i=1}^R \mathfrak{a}_{p^au_i+\xi} \overline{\mathfrak{a}_{p^av_i+\xi}}\bigg) e\bigg(\sum_{i=1}^R \big(\psi(p^au_i+\xi;\bm{\alpha})-\psi(p^av_i+\xi;\bm{\alpha})\big)\bigg).
\end{align*}
Since $g_{\bm{\mathfrak{c}'}}(\bm{\alpha},\bm{0})=f_{a+H'}(\bm{\alpha},\kappa)$, we may reverse the previous process to obtain
\begin{align}\label{7.21}
\oint_{p^{B}}\oint_{p^{B'}}\left|g_{\bm{\mathfrak{c}'}}(\bm{\alpha},\bm{\beta})\right|^{2R} &\left|f_b({\bm{\alpha},\eta)}\right|^{2s-2R}\,d\bm{\beta}\,d\bm{\alpha}\nonumber\\
&=\oint_{p^{B}}\left|f_{a+H'}(\bm{\alpha},\kappa)\right|^{2R} \left|f_b({\bm{\alpha},\eta)}\right|^{2s-2R}\,d\bm{\alpha}.
\end{align}
Applying Lemma \ref{Lemma6.2}, we see that
\begin{align*}
\rho_{a+H'}(\kappa)^2 \left|f_{a+H'}(\bm{\alpha},\kappa)\right|^{2R}\leq q^{R(b'-a-H')}\sum_{\substack{\xi'\in\mathcal{E}(p^{b'})\\ \xi'\equiv\kappa\mmod{p^{a+H'}}}}\rho_{b'}(\xi')^2 \left|f_{b'}(\bm{\alpha},\xi')\right|^{2R}.
\end{align*}
We substitute this into (\ref{7.21}) and then (\ref{7.20}) to obtain
\begin{align*}
\rho_a(\xi)^2 K_{a,b,c}^{r,\bm{\phi}}(\bm{\mathfrak{a}};\xi,\eta) \ll q^{B'\epsilon^2+R(b'-a-H')}\sum_{\substack{\xi'\in\mathcal{E}(p^{b'})\\ \xi'\equiv\xi\mmod{p^{a}}}}\rho_{b'}(\xi')^2 K_{b',b,c}^{r,\bm{\phi}}(\bm{\mathfrak{a}};\xi',\eta),
\end{align*}
and therefore
\begin{equation*}
K_{a,b,c}^{r,\bm{\phi},\nu}(\bm{\mathfrak{a}})\ll q^{B'\epsilon^2+R(b'-a-H')}K_{b',b,c}^{r,\bm{\phi},\nu}(\bm{\mathfrak{a}}).
\end{equation*}
Finally, we have
\begin{align*}
R(b'-a-H')=tr(r+1)(b'-a-H')/2<tk^2\nu/2,
\end{align*}
and so, using the hierarchy (\ref{6.2}), we see that
\begin{equation*}
K_{a,b,c}^{r,\bm{\phi},\nu}(\bm{\mathfrak{a}})\ll q^{tk^2\nu}K_{b',b,c}^{r,\bm{\phi},\nu}(\bm{\mathfrak{a}}),
\end{equation*}
as required.
\end{proof}

From now on we drop any reference to $\bm{\phi}, \nu$ and $c$ in our notation, since they are assumed to remain fixed. Let $a,b,r\in\mathbb{N}$ satisfy the hypotheses of Lemma \ref{Lemma7.1}, and let $b'=\lceil (k-r+1)b/r\rceil$. We wish to swap the congruences modulo $p^{b'}$ and modulo $p^b$, which will ultimately permit us to iterate our lifting process.
\begin{lemma}\label{Lemma8.1}
For $r\geq 2$, we have
\begin{equation*}
K_{a,b}^r(\bm{\mathfrak{a}})\ll q^{tk^2\nu} K_{b,b'}^{k-r}(\bm{\mathfrak{a}})^{1/(k-r+1)}K_{b',b}^{r-1}(\bm{\mathfrak{a}})^{(k-r)/(k-r+1)}.
\end{equation*}
When $r=1$, we have
\begin{equation*}
K_{a,b}^1(\bm{\mathfrak{a}})\ll q^{tk^2\nu} K_{b,kb}^{k-1}(\bm{\mathfrak{a}})^{1/k}U_{s,k}^{B,b}(\bm{\mathfrak{a}})^{1-1/k}.
\end{equation*}
\end{lemma}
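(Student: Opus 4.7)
Plan. The strategy is a two-stage application of H\"older's inequality, preceded by an invocation of Lemma \ref{Lemma7.1} to raise the first congruence modulus from $p^a$ up to $p^{b'}$. Specifically, Lemma \ref{Lemma7.1} yields $K_{a,b}^r(\bm{\mathfrak{a}}) \ll q^{tk^2\nu}K_{b',b}^r(\bm{\mathfrak{a}})$, producing a mean value whose integrand features $|f_{b'}(\bm\alpha,\zeta)|^{2R}|f_b(\bm\alpha,\eta)|^{2s-2R}$, with $R = tr(r+1)/2$.

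Second, I would apply H\"older's inequality at the level of the inner integral with conjugate exponents $p_1 = k-r+1$ and $p_2 = (k-r+1)/(k-r)$, splitting the integrand as a product $F(\bm\alpha)\cdot G(\bm\alpha)$ where
\begin{align*}
F^{p_1} &= |f_b(\bm\alpha,\eta)|^{2T}|f_{b'}(\bm\alpha,\zeta)|^{2s-2T}, & T &= t(k-r)(k-r+1)/2, \\
G^{p_2} &= |f_{b'}(\bm\alpha,\zeta)|^{2R'}|f_b(\bm\alpha,\eta)|^{2s-2R'}, & R' &= tr(r-1)/2.
\end{align*}
These are precisely the integrands that define $K_{b,b'}^{k-r}(\bm{\mathfrak{a}};\eta,\zeta)$ and $K_{b',b}^{r-1}(\bm{\mathfrak{a}};\zeta,\eta)$. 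Verifying that $FG$ has $|f_{b'}|$-exponent equal to $2R$ and $|f_b|$-exponent equal to $2s-2R$ reduces to the elementary identities $(r+1)(k-r+1) = rk-r^2+k+1$ and $(k-r)(k+r+1) = k(k+1)-r(r+1)$, in both cases using $s = tk(k+1)/2$. A second application of H\"older to the outer sum over $(\zeta,\eta) \in \mathcal{E}(p^{b'})\times\mathcal{E}(p^b)$ with $\zeta\not\equiv\eta\pmod{p^\nu}$, obtained by factoring the weight $\rho_{b'}(\zeta)^2\rho_b(\eta)^2$ as $(\rho_{b'}(\zeta)^2\rho_b(\eta)^2)^{1/p_1}(\rho_{b'}(\zeta)^2\rho_b(\eta)^2)^{1/p_2}$, then packages the resulting two sums into $\rho_0^4 K_{b,b'}^{k-r}(\bm{\mathfrak{a}})$ and $\rho_0^4 K_{b',b}^{r-1}(\bm{\mathfrak{a}})$ respectively. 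The constraint $\zeta\not\equiv\eta\pmod{p^\nu}$ is symmetric in its arguments, and so is preserved correctly in both $K$-sums after relabelling.

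For the $r = 1$ case we have $R' = 0$, so the would-be $K_{b',b}^{0}$ integrand collapses to $|f_b(\bm\alpha,\eta)|^{2s}$, which is independent of $\zeta$. Summing freely over $\zeta \in \mathcal{E}(p^{b'})$ after dropping the $\zeta\not\equiv\eta\pmod{p^\nu}$ restriction, and using $\sum_\zeta \rho_{b'}(\zeta)^2 = \rho_0^2$, bounds $K_{b',b}^0(\bm{\mathfrak{a}})$ above by $U_{s,k}^{B,b}(\bm{\mathfrak{a}})$; since $b' = kb$ when $r = 1$, the second formula follows. The main obstacle in the whole argument is really the bookkeeping of exponents in the H\"older decomposition: the indices $R$, $R'$ and $T$ must interlock cleanly so that $F$ and $G$ are recognisably integrands of $K_{b,b'}^{k-r}$ and $K_{b',b}^{r-1}$, and this is ultimately forced by the scaling $R = tr(r+1)/2$ and the shape of the three $K$-mean values involved.
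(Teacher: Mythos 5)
Your proof is correct and follows essentially the same route as the paper: first pass from $K_{a,b}^r$ to $K_{b',b}^r$ via Lemma \ref{Lemma7.1}, then split the integrand of $K_{b',b}^r$ by H\"older with exponents $k-r+1$ and $(k-r+1)/(k-r)$ to recognise $K_{b,b'}^{k-r}$ and $K_{b',b}^{r-1}$, and for $r=1$ observe that $K_{b',b}^0$ reduces (with $b'=kb$) to $U_{s,k}^{B,b}$. The paper's proof is simply a citation to Wooley's Lemma 8.1 in \cite{NEC}; your exponent bookkeeping ($T=t(k-r)(k-r+1)/2$, $R'=tr(r-1)/2$, and the two verified identities) supplies exactly the detail that citation leaves implicit.
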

\begin{proof}
As in \cite[Lemma 8.1]{NEC}, we apply H\"older's inequality to obtain
\begin{equation*}
K_{b',b}^r(\bm{\mathfrak{a}})\ll K_{b,b'}^{k-r}(\bm{\mathfrak{a}})^{1/(k-r+1)}K_{b',b}^{r-1}(\bm{\mathfrak{a}})^{(k-r)/(k-r+1)},
\end{equation*}
so when $r\geq 2$ we are done by Lemma \ref{Lemma7.1}. When $r=1$, we observe that $K_{b',b}^0(\bm{\mathfrak{a}})=U_{s,k}^{B,b}(\bm{\mathfrak{a}})$, which gives the claimed result.
\end{proof}

We now bound the normalised version of our mean values, and we write $\widetilde{K}_{a,b}^{r}(\bm{\mathfrak{a}})$ as shorthand for $\widetilde{K}_{a,b}^{r}(\bm{\mathfrak{a}})_{\Lambda}$.

\begin{lemma}\label{Lemma8.2}
For $r\geq 2$, we have
\begin{equation*}
\widetilde{K}_{a,b}^r(\bm{\mathfrak{a}})\ll q^{tk^2\nu} \widetilde{K}_{b,b'}^{k-r}(\bm{\mathfrak{a}})^{1/(k-r+1)}\widetilde{K}_{b',b}^{r-1}(\bm{\mathfrak{a}})^{1-1/r}.
\end{equation*}
When $r=1$, we have
\begin{equation*}
\widetilde{K}_{a,b}^1(\bm{\mathfrak{a}})\ll q^{2tk^2\nu} \widetilde{K}_{b,kb}^{k-1}(\bm{\mathfrak{a}})^{1/k}(q^{-b})^{\Lambda(1-1/k)}.
\end{equation*}
\end{lemma}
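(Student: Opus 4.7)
The plan is to derive Lemma \ref{Lemma8.2} directly from Lemma \ref{Lemma8.1} by dividing through by the normalising factor $N = q^{\Lambda H} U_{s,k}^{B,H}(\bm{\mathfrak{a}})$ and tracking how the exponents behave under the renormalisation. Throughout, write $\alpha_r = (k-1)/(r(k-r))$, so that $\widetilde{K}_{a,b}^r(\bm{\mathfrak{a}}) = \bigl(K_{a,b}^r(\bm{\mathfrak{a}})/N\bigr)^{\alpha_r}$ by \eqref{normdK}.

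For $r\geq 2$, I would begin from
\begin{equation*}
K_{a,b}^r(\bm{\mathfrak{a}})\ll q^{tk^2\nu} K_{b,b'}^{k-r}(\bm{\mathfrak{a}})^{1/(k-r+1)} K_{b',b}^{r-1}(\bm{\mathfrak{a}})^{(k-r)/(k-r+1)}
\end{equation*}
provided by Lemma \ref{Lemma8.1}. Since the exponents on the right-hand side sum to $1$, dividing both sides by $N$ distributes cleanly and yields the same inequality with each $K$ replaced by $K/N$. Raising to the power $\alpha_r$ converts each $K_{\cdot,\cdot}^j/N$ into $\widetilde{K}_{\cdot,\cdot}^j$ raised to the power $\alpha_r/\alpha_j$ times the original weight. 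Two elementary identities then finish the computation: the symmetric form of $\alpha_r$ in $r\leftrightarrow k-r$ gives $\alpha_r/\alpha_{k-r}=1$, so the first factor comes out with exponent $1/(k-r+1)$; and $\alpha_r/\alpha_{r-1} = (r-1)(k-r+1)/(r(k-r))$, so the second factor gets exponent $(k-r)/(k-r+1)\cdot (r-1)(k-r+1)/(r(k-r)) = 1-1/r$. Since $\alpha_r\leq 1$ throughout $1\leq r\leq k-1$, the factor $q^{\alpha_r tk^2\nu}$ is dominated by $q^{tk^2\nu}$.

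For $r=1$, I would apply the same division argument to the second bound of Lemma \ref{Lemma8.1}, noting that $\alpha_1 = 1$, so $\widetilde{K}_{a,b}^1 = K_{a,b}^1/N$. The new ingredient is the factor $U_{s,k}^{B,b}(\bm{\mathfrak{a}})^{1-1/k}$, which must be translated into the expected $(q^{-b})^{\Lambda(1-1/k)}$. Assuming $b\leq (1-\delta)H$ (which is inherited from the standing hypotheses), Lemma \ref{Lemma4.1} gives
\begin{equation*}
U_{s,k}^{B,b}(\bm{\mathfrak{a}})/N \ll q^{(H-b)(\Lambda+\epsilon) - \Lambda H} = q^{-b\Lambda}\cdot q^{(H-b)\epsilon}.
\end{equation*}
Raising to the $(1-1/k)$-th power produces the required $(q^{-b})^{\Lambda(1-1/k)}$ together with an extra factor at most $q^{H\epsilon}$. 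The defining choice $\nu=\lceil 4\epsilon H\Lambda^{-1}\rceil$ from \eqref{nutheta} guarantees $H\epsilon\leq tk^2\nu$ (with plenty of slack, since $\Lambda\leq s = tk(k+1)/2$), and so this error is absorbed into an additional factor of $q^{tk^2\nu}$, producing the $q^{2tk^2\nu}$ that appears in the statement.

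The main obstacle, if one can call it that, is purely bookkeeping: the $r\geq 2$ case reduces to verifying the two exponent identities $\alpha_r/\alpha_{k-r}=1$ and $\alpha_r/\alpha_{r-1}=(r-1)(k-r+1)/(r(k-r))$, while the $r=1$ case requires careful tracking of the $\epsilon$-errors arising from Lemma \ref{Lemma4.1} against the size of $\nu$ to confirm that the factor $q^{H\epsilon}$ is genuinely dominated by $q^{tk^2\nu}$. One should also check that the hypotheses of Lemma \ref{Lemma4.1} are met at each invocation, i.e.\ that $b\leq (1-\delta)H$ throughout the regime in which the lemma is intended to be applied; this is compatible with the hierarchy \eqref{6.2} provided one interprets the combined constraints from Lemma \ref{Lemma7.1}.
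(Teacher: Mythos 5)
Your proposal is correct and follows essentially the same route as the paper: divide the bounds of Lemma \ref{Lemma8.1} by the normalising quantity $q^{\Lambda H}U_{s,k}^{B,H}(\bm{\mathfrak{a}})$, use that the right-hand exponents sum to $1$ so the division distributes, raise to the power $\alpha_r=(k-1)/(r(k-r))$, and verify the exponent identities $\alpha_{k-r}=\alpha_r$ and $\alpha_r/\alpha_{r-1}=(r-1)(k-r+1)/(r(k-r))$ together with $\alpha_r\leq 1$; for $r=1$ you invoke Lemma \ref{Lemma4.1} to convert the ratio $U_{s,k}^{B,b}/(q^{\Lambda H}U_{s,k}^{B,H})$ into $q^{-b\Lambda}$ up to an $\epsilon$-loss absorbed by the choice of $\nu$. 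The only cosmetic difference is that the paper bounds the $\epsilon$-error via $q^{s\nu}$ while you pass through $q^{H\epsilon}$; both absorb into the extra $q^{tk^2\nu}$ in the same way.
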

\begin{proof}
As in \cite[Lemma 8.2]{NEC}, when $r\geq 2$ we use Lemma \ref{Lemma8.1} and (\ref{normdK}) to conclude that
\begin{equation*}
\widetilde{K}_{a,b}^r(\bm{\mathfrak{a}})\ll (q^{tk^2\nu})^{(k-1)/r(k-r)} \widetilde{K}_{b,b'}^{k-r}(\bm{\mathfrak{a}})^{1/(k-r+1)}\widetilde{K}_{b',b}^{r-1}(\bm{\mathfrak{a}})^{1-1/r},
\end{equation*}
which leads directly to the desired conclusion since $(k-1)/r(k-r)\leq 1$ 
for $1\leq r\leq k-1$. When $r=1$, we have
\begin{equation*}
\widetilde{K}_{a,b}^1(\bm{\mathfrak{a}})\ll q^{tk^2\nu} \widetilde{K}_{b,kb}^{k-1}(\bm{\mathfrak{a}})^{1/k}V^{1-1/k},
\end{equation*}
where
\begin{equation*}
V=\frac{U_{s,k}^{B,b}(\bm{\mathfrak{a}})}{q^{\Lambda H}U_{s,k}^{B,H}(\bm{\mathfrak{a}})}.
\end{equation*}
We also have
\begin{align*}
U_{s,k}^{B,b}({\bm{\mathfrak{a}}})&\ll (q^{H-b})^{\Lambda+\epsilon}U_{s,k}^{B,H}({\bm{\mathfrak{a}}}),
\end{align*}
by Lemma \ref{Lemma4.1}, and consequently
\begin{equation*}
V\ll q^{\epsilon H-b({\Lambda+\epsilon})} \ll q^{s\nu-\Lambda b}.
\end{equation*}
We therefore see that
\begin{align*}
\widetilde{K}_{a,b}^1(\bm{\mathfrak{a}})&\ll q^{tk^2\nu} \widetilde{K}_{b,kb}^{k-1}(\bm{\mathfrak{a}})^{1/k}(q^{s\nu-\Lambda b})^{1-1/k}\\
&\ll q^{2tk^2\nu} \widetilde{K}_{b,kb}^{k-1}(\bm{\mathfrak{a}})^{1/k}(q^{-\Lambda b})^{1-1/k}
\end{align*}
since $s=tk(k+1)/2\leq tk^2$.
\end{proof}

For $1\leq j\leq k-1$, we write $\rho_j=j/(k-j+1)$ and $b_j=\lceil b/\rho_j\rceil$. In the next lemma, we make use of the inductive hypothesis to improve our bound on $\widetilde{K}_{a,b}^r(\bm{\mathfrak{a}})$.
\begin{lemma}\label{Lemma9.1}
Let $1\leq r\leq k-1$, and let $a\geq \delta\theta$ and $b\geq k\delta\theta$ with $ra\leq (k-r+1)b$. Then for $kb\leq B$, we have
\begin{equation*}
\widetilde{K}_{a,b}^r(\bm{\mathfrak{a}})\ll q^{(r+1)tk^2\nu} (q^{-b})^{\Lambda(1-1/k)/r}\prod_{j=1}^r \widetilde{K}_{b,b_j}^{k-j}(\bm{\mathfrak{a}})^{\rho_j/r}.
\end{equation*}
\end{lemma}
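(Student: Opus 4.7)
The plan is to proceed by induction on $r$. The base case $r=1$ is immediate from the $r=1$ clause of Lemma \ref{Lemma8.2}: since $\rho_1 = 1/k$ we have $b_1 = \lceil kb\rceil = kb$ and $\rho_1/r = 1/k$, so the bound
\begin{equation*}
\widetilde{K}_{a,b}^1(\bm{\mathfrak{a}}) \ll q^{2tk^2\nu}\,\widetilde{K}_{b,kb}^{k-1}(\bm{\mathfrak{a}})^{1/k}\,(q^{-b})^{\Lambda(1-1/k)}
\end{equation*}
supplied by Lemma \ref{Lemma8.2} reads exactly as the $r=1$ case of Lemma \ref{Lemma9.1}.

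Now suppose the lemma holds at $r-1$ with $2\leq r\leq k-1$. Applying the $r\geq 2$ clause of Lemma \ref{Lemma8.2} gives
\begin{equation*}
\widetilde{K}_{a,b}^r(\bm{\mathfrak{a}}) \ll q^{tk^2\nu}\,\widetilde{K}_{b,b_r}^{k-r}(\bm{\mathfrak{a}})^{1/(k-r+1)}\,\widetilde{K}_{b_r,b}^{r-1}(\bm{\mathfrak{a}})^{(r-1)/r},
\end{equation*}
where $b_r = \lceil(k-r+1)b/r\rceil$ is exactly the parameter $b'$ appearing in Lemmata \ref{Lemma7.1} and \ref{Lemma8.1}. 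To apply the inductive hypothesis to $\widetilde{K}_{b_r,b}^{r-1}$, one must verify the hypotheses $b_r\geq\delta\theta$, $b\geq k\delta\theta$, $(r-1)b_r\leq (k-r+2)b$, and $kb\leq B$. The first follows from $b_r\geq(k-r+1)b/r\geq 2b/(k-1)\geq 2\delta\theta$, using $r\leq k-1$ and $b\geq k\delta\theta$; the third from the arithmetic identity $(k-r+2)-(r-1)(k-r+1)/r = (k+1)/r$, with the ceiling contribution absorbed since $b$ is large in the hierarchy (\ref{6.2}); and the remaining two are preserved under the substitution.

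The inductive hypothesis then yields
\begin{equation*}
\widetilde{K}_{b_r,b}^{r-1}(\bm{\mathfrak{a}}) \ll q^{rtk^2\nu}(q^{-b})^{\Lambda(1-1/k)/(r-1)}\prod_{j=1}^{r-1}\widetilde{K}_{b,b_j}^{k-j}(\bm{\mathfrak{a}})^{\rho_j/(r-1)}.
\end{equation*}
Raising this to the power $(r-1)/r$ transforms the exponents on $q^{tk^2\nu}$, $q^{-b}$, and each $\widetilde{K}$-factor to $r-1$, $\Lambda(1-1/k)/r$, and $\rho_j/r$ respectively. Substituting back into the bound from Lemma \ref{Lemma8.2}, combining the two $q^{tk^2\nu}$ factors to obtain total exponent $rtk^2\nu\leq (r+1)tk^2\nu$, and using the key identity $\rho_r/r = 1/(k-r+1)$ to absorb $\widetilde{K}_{b,b_r}^{k-r}(\bm{\mathfrak{a}})^{1/(k-r+1)}$ into the product as its $j=r$ term, one recovers exactly the statement of Lemma \ref{Lemma9.1}.

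The main obstacle is the bookkeeping: the identities $\rho_r/r = 1/(k-r+1)$ and $\rho_j/(r-1)\cdot(r-1)/r = \rho_j/r$ make the three factors of Lemma \ref{Lemma8.2} combine cleanly with the inductive hypothesis at $r-1$, and the strengthened assumption $b\geq k\delta\theta$ (rather than merely $b\geq\delta\theta$) is precisely what guarantees the shifted parameter $b_r$ still satisfies $b_r\geq\delta\theta$, so that the induction can be carried out.
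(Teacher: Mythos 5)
Your proof is correct and follows essentially the same route as the paper: induction on $r$ with the base case supplied by the $r=1$ clause of Lemma \ref{Lemma8.2}, and the inductive step obtained by applying the $r\geq 2$ clause of Lemma \ref{Lemma8.2}, bounding the $\widetilde{K}_{b_r,b}^{r-1}$ factor via the inductive hypothesis, and recombining exponents using $\rho_r/r = 1/(k-r+1)$. Your verification of the hypotheses (in particular that $b_r\geq\delta\theta$ and $(r-1)b_r\leq(k-r+2)b$, with the ceiling slack absorbed because $b\gg 1$) matches the paper's argument, and your observation that the combined $q$-power is $rtk^2\nu\leq(r+1)tk^2\nu$ is exactly the small weakening the paper also accepts.
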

\begin{proof}
When $r=1$, this follows immediately from Lemma \ref{Lemma8.2}. For $r\geq 2$, we proceed inductively, as in \cite[Lemma 9.1]{NEC}. Suppose that the conclusion is known for all $r<r_0$ for some $2\leq r_0\leq k-1$. By Lemma \ref{Lemma8.2}, we have
\begin{equation}\label{Kr_0}
\widetilde{K}_{a,b}^{r_0}(\bm{\mathfrak{a}})\ll q^{tk^2\nu} \widetilde{K}_{b,b_0}^{k-r_0}(\bm{\mathfrak{a}})^{1/(k-r_0+1)}\widetilde{K}_{b_0,b}^{r_0-1}(\bm{\mathfrak{a}})^{1-1/r_0},
\end{equation}
with $b_0=b_{r_0}=\lceil (k-r_0+1)b/r_0\rceil\geq 2b/k > \delta\theta$. We also have
\begin{align*}
(r_0-1)b_0 \leq (r_0-1)\big((k-r_0+1)b/r_0+1\big)<(k-r_0+2)b,
\end{align*}
where the second inequality follows from (\ref{nutheta}) and the fact that we may choose $B$ sufficiently large. We therefore use the inductive hypothesis to bound $\widetilde{K}_{b_0,b}^{r_0-1}(\bm{\mathfrak{a}})$, obtaining
\begin{equation*}
\widetilde{K}_{b_0,b}^{r_0-1}(\bm{\mathfrak{a}})\ll q^{r_0tk^2\nu} (q^{-b})^{\Lambda(1-1/k)/(r_0-1)}\prod_{j=1}^{r_0-1} \widetilde{K}_{b,b_j}^{k-j}(\bm{\mathfrak{a}})^{\rho_j/(r_0-1)}.
\end{equation*}
Substituting this into (\ref{Kr_0}), and writing $\rho_0=\rho_{r_0}=r_0/(k-r_0+1)$, we see that
\begin{align*}
\widetilde{K}_{a,b}^{r_0}(\bm{\mathfrak{a}})&\ll q^{tk^2\nu+(r_0-1)tk^2\nu}(q^{-b})^{\Lambda(1-1/k)/r_0} \widetilde{K}_{b,b_0}^{k-r_0}(\bm{\mathfrak{a}})^{\rho_{0}/r_0}\prod_{j=1}^{r_0-1} \widetilde{K}_{b,b_j}^{k-j}(\bm{\mathfrak{a}})^{\rho_j/r_0}\\
&\ll q^{r_0tk^2\nu}(q^{-b})^{\Lambda(1-1/k)/r_0} \prod_{j=1}^{r_0} \widetilde{K}_{b,b_j}^{k-j}(\bm{\mathfrak{a}})^{\rho_j/r_0},
\end{align*}
and so the lemma follows by induction.
\end{proof}

\begin{lemma}\label{Lemma9.2}
Suppose that all of the hypotheses of Lemma \ref{Lemma9.1} hold. Then there exists an integer $r'$ with $1\leq r'\leq r$ such that
\begin{equation*}
\widetilde{K}_{a,b}^r(\bm{\mathfrak{a}})\ll \widetilde{K}_{b,b_{r'}}^{k-r'}(\bm{\mathfrak{a}})^{\rho_{r'}} (q^{-b})^{\Lambda/(2k)}.
\end{equation*}
\end{lemma}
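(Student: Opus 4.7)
The plan is to start from the product bound furnished by Lemma \ref{Lemma9.1} and replace the product by a single dominant term, then absorb the leftover powers of $q$ using the hierarchy.

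First, I would pick $r'$ with $1\leq r'\leq r$ so as to maximise the quantity $\widetilde{K}_{b,b_{r'}}^{k-r'}(\bm{\mathfrak{a}})^{\rho_{r'}}$. Since each factor $\widetilde{K}_{b,b_j}^{k-j}(\bm{\mathfrak{a}})^{\rho_j}$ is then bounded above by this maximum, and the exponents $\rho_j/r$ in Lemma \ref{Lemma9.1} sum to at most one factor's worth when combined over $j=1,\dots,r$, we obtain
\begin{equation*}
\prod_{j=1}^r \widetilde{K}_{b,b_j}^{k-j}(\bm{\mathfrak{a}})^{\rho_j/r}\leq \prod_{j=1}^r \widetilde{K}_{b,b_{r'}}^{k-r'}(\bm{\mathfrak{a}})^{\rho_{r'}/r}=\widetilde{K}_{b,b_{r'}}^{k-r'}(\bm{\mathfrak{a}})^{\rho_{r'}}.
\end{equation*}
Substituting this into Lemma \ref{Lemma9.1} reduces matters to showing
\begin{equation*}
q^{(r+1)tk^2\nu}(q^{-b})^{\Lambda(1-1/k)/r}\ll (q^{-b})^{\Lambda/(2k)},
\end{equation*}
which, after elementary rearrangement, is equivalent to the inequality $(r+1)tk^2\nu\leq b\Lambda\cdot E(r)$, where
\begin{equation*}
E(r)=\frac{1-1/k}{r}-\frac{1}{2k}=\frac{2(k-1)-r}{2rk}.
\end{equation*}

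The main verification is to check that the arithmetic works uniformly for $1\leq r\leq k-1$. One checks that $E(r)$ is a decreasing function of $r$ on $[1,k-1]$, so its minimum occurs at $r=k-1$, giving $E(r)\geq E(k-1)=1/(2k)>0$. Combined with the crude bound $r+1\leq k$, it therefore suffices to verify that $tk^3\nu\leq b\Lambda/(2k)$, that is $2tk^4\nu\leq b\Lambda$. Recalling from (\ref{nutheta}) that $\nu\leq 4\epsilon H\Lambda^{-1}+1$, and using $b\geq k\delta\theta\geq k\delta\mu H$ together with the hierarchy (\ref{6.2}) in which $\epsilon$ is small compared with $k,t,\Lambda,\delta,\mu$, this inequality is comfortably satisfied once $B$ (and hence $H$) is taken sufficiently large. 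Combining the three ingredients yields the claimed bound.

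The step most likely to need care is the final arithmetic check: the bound has to hold for every admissible $r$, and the narrowest margin occurs at $r=k-1$, where $E(r)$ equals exactly $1/(2k)$; this is precisely why the $\Lambda/(2k)$ appearing in the conclusion cannot be improved by this simple maximum argument. Everything else—the selection of $r'$ and the telescoping of exponents—is essentially a one-line application of the trivial bound of a product by a power of its largest factor.
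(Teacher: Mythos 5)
Your proposal is correct and follows essentially the same route as the paper: bound the product from Lemma~\ref{Lemma9.1} by its largest factor, then verify that the leftover $q$-powers can be absorbed via the hierarchy, with the critical case being $r=k-1$. The only cosmetic difference is that you pass directly from the geometric mean to the maximum, whereas the paper (following Wooley) first invokes the elementary inequality $|z_1\cdots z_n|\leq|z_1|^n+\cdots+|z_n|^n$ to convert the product into a sum and then isolates a dominant summand; both yield the same conclusion with the same numerology.
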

\begin{proof}
As in \cite[Lemma 9.2]{NEC}, we combine the inequality
\begin{equation*}
\left|z_1\dots z_n\right|\leq \left| z_1\right|^n+\dots+\left| z_n\right|^n
\end{equation*}
with Lemma \ref{Lemma9.1} to obtain
\begin{equation*}
\widetilde{K}_{a,b}^r(\bm{\mathfrak{a}})\ll q^{(r+1)tk^2\nu} (q^{-b})^{\Lambda(1-1/k)/r}\sum_{j=1}^r \widetilde{K}_{b,b_j}^{k-j}(\bm{\mathfrak{a}})^{\rho_j}.
\end{equation*}
In particular, for some $1\leq r'\leq r$, we have
\begin{equation*}
\widetilde{K}_{a,b}^r(\bm{\mathfrak{a}})\ll q^{(r+1)tk^2\nu} (q^{-b})^{\Lambda(1-1/k)/r} \widetilde{K}_{b,b_{r'}}^{k-r'}(\bm{\mathfrak{a}})^{\rho_{r'}},
\end{equation*}
so it remains to prove that 
\begin{equation}\label{randomqb}
q^{(r+1)tk^2\nu} (q^{-b})^{\Lambda(1-1/k)/r}\leq (q^{-b})^{\Lambda/(2k)}.
\end{equation}
We have $(1-1/k)/r\geq 1/k$ for $1\leq r\leq k-1$, so
\begin{equation*}
q^{(r+1)tk^2\nu} (q^{-b})^{\Lambda(1-1/k)/r}\leq q^{(r+1)tk^2\nu} (q^{-b})^{\Lambda/k}.
\end{equation*}
By our assumptions on $b$ and $r$, and using (\ref{nutheta}), we see that
\begin{equation*}
b\Lambda/k\geq \delta\theta\Lambda\geq \delta\mu H\Lambda \mbox{ and }2tk^3\nu\geq 2(r+1)tk^2\nu,
\end{equation*}
and by (\ref{6.2}) and (\ref{nutheta}), we may choose our parameters to ensure that
\begin{equation*}
\delta\mu H\Lambda>2tk^3 \lceil 4\epsilon H\Lambda^{-1}\rceil=2tk^3\nu,
\end{equation*}
so
\begin{equation*}
q^{(r+1)tk^2\nu} \leq q^{b\Lambda/(2k)}
\end{equation*}
and (\ref{randomqb}) is proved.
\end{proof}
Finally, we use Lemma \ref{Lemma9.2} to deduce an iterative bound of the necessary shape, which will be used in Section \ref{PfThm3.1} to prove Theorem \ref{Thm3.1}.
\begin{lemma}\label{Lemma9.3}
Let $1\leq r\leq k-1$, and suppose $a\geq \delta\theta$ and $b\geq k^2\delta\theta$ with $ra\leq (k-r+1)b$. Then whenever $k^2b\leq B$, there exist integers $r'$ with $1\leq r'\leq k-1$, as well as $a'\geq \delta\theta$ and $b'\geq k^2\delta\theta$ with $r'a'\leq (k-r'+1)b'$, and there exists a real number $0<\rho\leq (1-1/k)^2$ satisfying
\begin{equation*}
(1+2/k)b\leq b'\leq k^2b, \quad\quad b'=\bigg\lceil\frac{(r'+1)a'}{k-r'} \bigg\rceil, \quad\quad \rho b'\geq b,
\end{equation*}
and such that
\begin{equation*}
\widetilde{K}_{a,b}^r(\bm{\mathfrak{a}})\ll \widetilde{K}_{a',b'}^{r'}(\bm{\mathfrak{a}})^{\rho} (q^{-b})^{\Lambda/(2k)}.
\end{equation*}
\end{lemma}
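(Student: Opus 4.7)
My plan is to iterate Lemma~\ref{Lemma9.2} at most twice. Applied to some $\widetilde{K}_{a^{(i)}, b^{(i)}}^{r^{(i)}}(\bm{\mathfrak{a}})$ satisfying its hypotheses, Lemma~\ref{Lemma9.2} yields $r_*^{(i)} \in [1, r^{(i)}]$, and I advance by setting $a^{(i+1)} = b^{(i)}$, $b^{(i+1)} = \lceil(k-r_*^{(i)}+1) b^{(i)}/r_*^{(i)}\rceil$, $r^{(i+1)} = k - r_*^{(i)}$, and $\rho^{(i+1)} = \rho^{(i)} \rho_{r_*^{(i)}}$. Starting from $(a,b,r,1)$, I iterate until the first $j \geq 1$ with $\rho^{(j)} \leq (1-1/k)^2$ and take $(a', b', r', \rho) = (a^{(j)}, b^{(j)}, r^{(j)}, \rho^{(j)})$; telescoping the single-step bounds from Lemma~\ref{Lemma9.2} gives
\begin{equation*}
\widetilde{K}_{a,b}^r(\bm{\mathfrak{a}}) \ll \widetilde{K}_{a^{(j)}, b^{(j)}}^{r^{(j)}}(\bm{\mathfrak{a}})^{\rho^{(j)}} \prod_{l=0}^{j-1} (q^{-b^{(l)}})^{\rho^{(l)} \Lambda/(2k)}.
\end{equation*}

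To iterate I need to verify the hypotheses of Lemma~\ref{Lemma9.2} at each step. The recursion builds in the ceiling identity $b^{(i+1)} = \lceil (r^{(i+1)}+1) a^{(i+1)}/(k-r^{(i+1)}) \rceil$; the ratio condition reduces to the trivial $r_*^{(i)}(k-r_*^{(i)}) \leq (r_*^{(i)}+1)(k-r_*^{(i)}+1)$; the telescoping $\rho^{(i+1)} b^{(i+1)} \geq b$ (immediate from $b^{(i+1)} \geq (k-r_*^{(i)}+1)b^{(i)}/r_*^{(i)}$) combined with the crude $\rho^{(i+1)} \leq ((k-1)/2)^{i+1}$ and the hypothesis $b \geq k^2\delta\theta$ gives the required $b^{(i+1)} \geq k\delta\theta$; and the single-step $b^{(i+1)} \leq k b^{(i)}$ turns $k^2 b \leq B$ into enough room for at most two iterations. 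The key input is then that two iterations always suffice: since $r_*^{(1)} \leq r^{(1)} = k - r_*^{(0)}$, I have $r_*^{(0)} + r_*^{(1)} \leq k$, and monotonicity of $x \mapsto x/(k-x+1)$ gives, with $u = r_*^{(0)}$,
\begin{equation*}
\rho^{(2)} \leq \frac{u(k-u)}{(u+1)(k-u+1)} \leq \frac{k^2}{(k+2)^2} \leq \Bigl(1-\tfrac{1}{k}\Bigr)^2,
\end{equation*}
where the middle bound uses $(u+1)(k-u+1) = u(k-u) + k + 1$ together with $u(k-u) \leq k^2/4$, and the last is equivalent to $(k-2)(2k^2+k-2) \geq 0$, which holds for $k \geq 2$.

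The remaining output conditions fall out readily from these ingredients. Since $\rho^{(l)} b^{(l)} \geq b$ for every $l \leq j$, each of the at most two factors $(q^{-b^{(l)}})^{\rho^{(l)}\Lambda/(2k)}$ is dominated by $(q^{-b})^{\Lambda/(2k)}$, and since $j \leq 2$ their product is absorbed into a single such factor. The bound $\rho b' \geq b$ is the same telescope, $b' \leq k^j b \leq k^2 b$ uses the single-step growth, and $b' \geq b/\rho \geq k^2 b/(k-1)^2 \geq (1+2/k)b$ combines $\rho \leq (1-1/k)^2$ with the elementary inequality $k^3 \geq (k+2)(k-1)^2$. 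I expect the main obstacle to be the case analysis delivering $\rho^{(2)} \leq (1-1/k)^2$, which hinges on the linear constraint $r_*^{(0)} + r_*^{(1)} \leq k$ that comes from $r_*^{(1)} \leq r^{(1)}$; the rest of the argument is bookkeeping parallel to \cite[Lemma~9.3]{NEC}.
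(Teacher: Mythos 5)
Your proposal is correct and takes essentially the same route as the paper, which (following Wooley's Lemma 9.3 in \cite{NEC}) applies Lemma \ref{Lemma9.2} twice and then verifies the resulting conditions. Your write-up supplies the bookkeeping the paper leaves implicit: the constraint $r_*^{(0)} + r_*^{(1)} \leq k$ coming from $r_*^{(1)} \leq r^{(1)} = k - r_*^{(0)}$, the monotonicity argument reducing to $\rho^{(2)} \leq k^2/(k+2)^2 \leq (1-1/k)^2$, and the verification of the intermediate hypotheses. One small imprecision: the ``crude'' bound $\rho^{(i+1)} \leq ((k-1)/2)^{i+1}$ is only needed (and is only useful, since $(k-1)/2 \geq 1$ for $k\geq 3$) at $i=0$ to check $b^{(1)} \geq k\delta\theta$ for the second application; as stated for general $i$ it would not by itself give $b^{(i+1)} \geq k\delta\theta$ for larger $i$, but this never arises since you stop after at most two steps. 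The output bound $b' \geq k^2\delta\theta$ is better obtained directly from $b' \geq (1+2/k)b \geq (1+2/k)k^2\delta\theta$, as you note later.
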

\begin{proof}
Exactly as in \cite[Lemma 9.3]{NEC}, we apply Lemma \ref{Lemma9.2} twice, and then verify that the conditions hold.
\end{proof}

\section{Proof of Theorem \ref{Thm3.1}}\label{PfThm3.1}
Throughout this section, we consider $k\in\mathbb{N}$ and let $s=tk(k+1)/2$. The case $k=1$ has been handled in Proposition \ref{Lemma5.1}, so we may assume that $k\geq 2$, and that Theorem \ref{Thm3.1} is known for exponents smaller than $k$. If $\lambda(s,k)\leq 0$, we are done, so we assume that $\lambda(s,k)=\Lambda>0$ and work towards a contradiction. As in \cite[Section 10]{NEC}, we use Lemma \ref{Lemma6.3} and our hierarchy (\ref{6.2}) to see that
\begin{equation}\label{10.1}
\widetilde{K}_{\theta,\theta}^1(\bm{\mathfrak{a}})\gg q^{-2s\theta}.
\end{equation}

We now set $N=\lceil 16sk/\Lambda\rceil$, again noting that the existence of $N$ depends on the assumption that $\Lambda>0$, and repeatedly apply Lemma \ref{Lemma9.3} to obtain sequences $(a_n),(b_n),(r_n)$ and $(\rho_n)$ for $0\leq n\leq N$, satisfying
\begin{align*}
1\leq r_n\leq k-1, \quad k^2\delta\theta\leq b_n\leq k^{2n+2}\theta,\quad \delta\theta\leq a_n\leq (k-r_n+1)b_n/r_n,
\end{align*}
and, for $n\geq 1$,
\begin{align*}
0<\rho_n\leq (1-1/k)^2,\quad \rho_n b_n\geq b_{n-1},
\end{align*}
and such that
\begin{equation}\label{10.6}
\widetilde{K}_{\theta,\theta}^1(\bm{\mathfrak{a}})\ll \widetilde{K}_{a_n,b_n}^{r_n}(\bm{\mathfrak{a}})^{\rho_1\dots\rho_n}(q^{-\Lambda/(2k)})^{nb_0},
\end{equation}
where the empty product $\rho_1\dots\rho_n$ for $n=0$ is interpreted as $1$. The initial choice $a_0=b_0=\theta$ and $r_0=\rho_0=1$ therefore trivially satisfies (\ref{10.6}). The existence of such sequences follows by induction, using the same argument as in \cite[Section 10]{NEC}.

Using (\ref{10.6}) in the case $n=N$ in conjunction with (\ref{10.1}), and writing $\rho=\rho_1\dots\rho_N$, gives the bound
\begin{equation}\label{1stpfbd}
q^{-2s\theta}\ll \widetilde{K}_{a_N,b_N}^{r_N}(\bm{\mathfrak{a}})^{\rho}(q^{-\Lambda/(2k)})^{N\theta},
\end{equation}
and applying Lemma \ref{Lemma4.2} in the case where $\lambda(s,k)=\Lambda$ gives
\begin{equation}\label{2ndpfbd}
\widetilde{K}_{a_N,b_N}^{r_N}(\bm{\mathfrak{a}})\ll q^{H\epsilon}.
\end{equation}
By our hierarchy (\ref{6.2}), in combination with (\ref{1stpfbd}) and (\ref{2ndpfbd}), we may assume that $H\epsilon\leq\theta$, so that
\begin{equation*}
q^{-2s\theta}\ll (q^{\rho-N\Lambda/(2k)})^{\theta}.
\end{equation*}
Using the fact that $\rho<1$, we rearrange this to obtain
\begin{equation}\label{this}
q^{4s\theta}\gg (q^{N\Lambda/(2k)})^{\theta}.
\end{equation}
We now observe that (\ref{nutheta}) implies that $q^{\theta}$
is sufficiently large with respect to $s$, $k$ and $\Lambda$, so (\ref{this}) can only hold if $4s \geq N\Lambda/(2k)$.
The definition of N leads ultimately to the relation
\begin{equation*}
\Lambda\leq 8sk/N\leq\Lambda/2,
\end{equation*}
a contradiction to the assumption that $\lambda(s,k)=\Lambda>0$, and so Theorem \ref{Thm3.1} is proved. \qed

\section{Proof of Theorem \ref{Thm1.1}}\label{PfMainThm}
We follow the approach given in \cite[§11-12]{NEC} with appropriate modifications for our ellipsephic situation. Let $\tau>0$ be sufficiently small in terms of $s$ and $k$, and let $X$ be sufficiently large in terms of $\tau,s,k$ and $\epsilon$. Set
\begin{align*}
B=\bigg\lceil\frac{k\log{X}}{\log p}\bigg\rceil,
\end{align*}
so that we have $X\leq p^{B/k}\leq pX$. Recall that the Wronskian of our system of polynomials is given by
\begin{align*}
W(z,\bm{\phi})&=\det\big(\phi_j^{(i)}(z)\big)_{1\leq i,j\leq k}
\end{align*}
where $\phi_j^{(i)}(z)$ is the $i$th derivative of $\phi_j$ with respect to $z$. We partition our variables according to whether or not the Wronskian at that point is zero or non-zero modulo $p$. Let
\begin{align*}
F(\bm{\alpha};X)&:=\rho_0^{-1}\sum_{x\in\mathcal{E}(X)}\mathfrak{a}_x e(\psi(x;\bm{\alpha}))\\
&=\rho_0^{-1}\sum_{\substack{x\in\mathcal{E}(X)\\ W(x,\bm{\phi})\not\equiv 0}}\mathfrak{a}_x e(\psi(x;\bm{\alpha})) +\rho_0^{-1}\sum_{\substack{x\in\mathcal{E}(X)\\ W(x,\bm{\phi})\equiv 0}}\mathfrak{a}_x e(\psi(x;\bm{\alpha}))\\
&=: F_p(\bm{\alpha};X) + F_0(\bm{\alpha};X).
\end{align*}
We therefore wish to bound the expression
\begin{align*}
\oint\abs{F(\bm{\alpha};X)}^{2s}\,d\bm{\alpha}
&=\oint\abs{F_p(\bm{\alpha};X) + F_0(\bm{\alpha};X)}^{2s}\,d\bm{\alpha}\\
&\ll \oint\abs{F_p(\bm{\alpha};X)}^{2s}\,d\bm{\alpha}+\oint\abs{F_0(\bm{\alpha};X)}^{2s}\,d\bm{\alpha}.
\end{align*}

Note that wherever necessary, we may assume without loss of generality that the above sums are over all $x\in\mathcal{E}$, with $\mathfrak{a}_x$ taken to be $0$ whenever $x>X$ and whenever $x$ fails the relevant Wronskian condition.
 
We first tackle the situation in which $W(x,\bm{\phi})\not\equiv 0\mmod{p}$ for each of our variables $x$, and observe that
\begin{align*}
\oint\abs{F_p(\bm{\alpha};X)}^{2s}\,d\bm{\alpha}\ll \oint_{p^B}\abs{F_p(\bm{\alpha};X)}^{2s}\,d\bm{\alpha},
\end{align*}
since any solution to our system of equations is certainly a solution to the corresponding system of congruences modulo $p^B$. We therefore focus on bounding the latter expression.

Taking $c=\lceil\tau B\rceil$, we partition our exponential sum into arithmetic progressions modulo $p^c$, applying Lemma \ref{Lemma6.2} to see that
\begin{align*}
\rho_{0}^2 \left|F_p(\bm{\alpha};X)\right|^{2s}\leq q^{sc}\sum_{\substack{\xi\in\mathcal{E}(p^{c})}}\rho_{c}(\xi)^2 \left|f_{c}(\bm{\alpha},\xi)\right|^{2s}.
\end{align*}
Consequently, we have
\begin{align}\label{eq11.5}
\oint_{p^B}\abs{F_p(\bm{\alpha};X)}^{2s}\,d\bm{\alpha}\leq q^{sc}\rho_0^{-2}\sum_{\xi\in\mathcal{E}(p^c)}\rho_c(\xi)^2 \oint_{p^B}\abs{f_c(\bm{\alpha},\xi)}^{2s}\,d\bm{\alpha}.
\end{align}
Observe that the integral on the right-hand side counts ellipsephic solutions to the system of congruences
\begin{align}\label{theseones}
\sum_{1\leq i\leq s} \phi_j(p^c u_i+\xi) \equiv \sum_{1\leq i\leq s} \phi_j(p^c v_i+\xi) \mmod{p^B}\quad (1\leq j\leq k),
\end{align}
with the appropriate weights. In particular, the classes $\xi$ for which the weights are non-zero necessarily have $W(\xi,\bm{\phi})\not\equiv 0\mmod{p}$.

We restrict our attention to some fixed such $\xi\in\mathcal{E}({p^c})$, and apply a Taylor expansion around $\xi$ to the functions $\phi_j$ to obtain
\begin{align*}
\phi_j(p^cu+\xi)-\phi_j(\xi)
&=\sum_{l\geq 1}\frac{\phi_j^{(l)}(\xi)}{l!}(p^cu)^{l}\\
&\equiv \sum_{1\leq l\leq k}\omega_{lj}(p^cu)^l +(p^cu)^{k+1}\Phi_j(p^cu)\mmod{p^B}
\end{align*}
for suitable polynomials $\Phi_j(z)\in\mathbb{Z}[z]$ and integer coefficients $\omega_{lj}={\phi_j^{(l)}(\xi)}/{l!}$. Let $\Omega=(\omega_{lj})_{1\leq l,j\leq k}$, and observe that
\begin{align*}
\det{\Omega}=W(\xi,\bm{\phi})\bigg(\prod_{1\leq l\leq k}l!\bigg)^{-1}.
\end{align*}
Since $p>k$ and $W(\xi,\bm{\phi})\not\equiv 0\mmod{p}$, we see that $\Omega$ has an inverse $\Omega^{-1}$ modulo $p^B$ with integer coefficients. We may therefore take linear combinations of the congruences (\ref{theseones}) in order to replace the polynomials $\bm{\phi}$ with $\Omega^{-1}\bm{\phi}$ and $\bm{\Phi}$ with $\Omega^{-1}\bm{\Phi}$, so without loss of generality we can assume that $\Omega$ is the $k\times k$ identity matrix. Consequently, we have
\begin{align*}
\phi_j(p^cu+\xi)-\phi_j(\xi)
&\equiv (p^cu)^j +(p^cu)^{k+1}\Phi_j(p^cu)\mmod{p^B}\\
&\equiv (p^c)^j\big(u^j+(p^c)^{k+1-j}u^{k+1}\Phi_j(p^cu)\big)\mmod{p^B}
\end{align*}
Let $\Psi_j(z)=z^j+(p^c)^{k+1-j}z^{k+1}\Phi_j(p^cz)$ for $1\leq j\leq k$. Then $\bm{\Psi}$ is a $p^c$-spaced system with the property that
\begin{align*}
\sum_{1\leq i\leq s} (p^c)^j(\Psi_j(u_i)-\Psi_j(v_i))\equiv 0\mmod{p^B},\quad (1\leq j\leq k).
\end{align*}
Our choice of parameters ensures that $ck<B$, so we have
\begin{align*}
\sum_{1\leq i\leq s} (\Psi_j(u_i)-\Psi_j(v_i))\equiv 0\mmod{p^{B-ck}},\quad (1\leq j\leq k).
\end{align*}

We now follow a similar process to that used in Lemma \ref{Lemma7.1}. Consider the weights $\mathfrak{c}_u(\bm{\alpha})=\mathfrak{a}_{p^cu+\xi}\,e\big(\psi(p^cu+\xi;\bm{\alpha})\big)$, and observe that $\rho_0 (\mathfrak{c})=\rho_c(\xi)$. 
 Let
\begin{align*}
g_{\bm{\mathfrak{c}}}(\bm{\alpha},\bm{\beta})= \rho_0(\bm{\mathfrak{c} })^{-1}\sum_{u\in\mathcal{E}}\mathfrak{c}_u(\bm{\alpha})e\big(\beta_1\Psi_1(u)+\dots+\beta_k\Psi_k(u)\big)
\end{align*}
so that by (\ref{Udefn}) we have
\begin{align*}
\oint_{p^{B}}\abs{g_{\bm{\mathfrak{c}}}(\bm{\alpha},\bm{0})}^{2s} \,d\bm{\alpha}
&=\oint_{p^{B}}\oint_{p^{B-ck}} \left|g_{\bm{\mathfrak{c}}}(\bm{\alpha},\bm{\beta})\right|^{2s}\,d\bm{\beta}\,d\bm{\alpha} \\
&=\oint_{p^{B}}U_{s,k}^{B-ck,\bm{\Psi}}(\bm{\mathfrak{c}}) \,d\bm{\alpha}.
\end{align*}
Since the system $\bm{\Psi}$ is $p^c$-spaced, we may apply Corollary \ref{Cor3.2} to obtain
\begin{align*}
U_{s,k}^{B-ck}(\bm{\mathfrak{c}})\ll q^{H\epsilon}U_{s,k}^{B-ck,H}(\bm{\mathfrak{c}})
\end{align*}
for $H=\lceil {(B-ck)}/{k}\rceil=\lceil {B}/{k}\rceil-c$. Consequently, we see that
\begin{align*}
\oint_{p^{B}} \left|f_{{{c}}}(\bm{\alpha},\xi)\right|^{2s}\,d\bm{\alpha}=\oint_{p^{B}} \left|g_{\bm{\mathfrak{c}}}(\bm{\alpha},\bm{0})\right|^{2s}\,d\bm{\alpha}\ll q^{H\epsilon}U_{s,k}^{B-ck,H}(\bm{\mathfrak{c}}),
\end{align*}
and substituting this into (\ref{eq11.5}) yields
\begin{align}\label{yields}
\oint_{p^B}\abs{F_p(\bm{\alpha};X)}^{2s}\,d\bm{\alpha}\leq q^{sc}\rho_0^{-2}\sum_{\xi\in\mathcal{E}(p^c)}\rho_c(\xi)^2 q^{H\epsilon}U_{s,k}^{B-ck,H}(\bm{\mathfrak{c}}).
\end{align}

Let
\begin{align*}
g_{\bm{\mathfrak{c}},H}(\bm{\alpha},\bm{\beta},\eta)= \rho_H(\eta;\bm{\mathfrak{c} })^{-1}\sum_{\substack{u\in\mathcal{E}\\ u\equiv\eta\mmod{p^H}}} \mathfrak{c}_u(\bm{\alpha}) e\big(\beta_1\Psi_1(u)+\dots+\beta_k\Psi_k(u)\big).
\end{align*}
Recall that, by assumption, the coefficients $\mathfrak{c}_u(\bm{\alpha})$ are zero whenever $p^cu+\xi>X$. By Cauchy's inequality, we have
\begin{align*}
\rho_H(\eta;\bm{\mathfrak{c} })^{2}\abs{g_{\bm{\mathfrak{c}},H}(\bm{\alpha},\bm{\beta},\eta)}^2&=
\bigg(\sum_{\substack{p^cu+\xi\in\mathcal{E}(X)\\ u\equiv\eta\mmod{p^H}}} 1\bigg)
\sum_{\substack{u\in\mathcal{E}\\ u\equiv\eta\mmod{p^H}}}  \abs{\mathfrak{c}_u(\bm{\alpha})}^2\\
&\ll \bigg(1+\frac{X}{q^{c+H}}\bigg)\rho_H(\eta;\bm{\mathfrak{c} })^{2},
\end{align*}
and so
\begin{align*}
\abs{g_{\bm{\mathfrak{c}},H}(\bm{\alpha},\bm{\beta},\eta)}^2
&\ll 1+\frac{X}{q^{c+H}}.
\end{align*}
Consequently, summing over the classes $\eta\in\mathcal{E}({p^H})$, we obtain
\begin{align*}
U_{s,k}^{B-ck,H}(\bm{\mathfrak{c}})&=\rho_0(\bm{\mathfrak{c}})^{-2}\sum_{\eta\in\mathcal{E}(p^H)}\rho_H(\eta;\bm{\mathfrak{c}})^2\oint_{p^{B-ck}}\abs{g_{\bm{\mathfrak{c}},H}(\bm{\alpha},\bm{\beta},\eta)}^{2s}\,d\bm{\beta}\\
&\ll \bigg(1+\frac{X}{q^{c+H}}\bigg)^s.
\end{align*}
We substitute this into (\ref{yields}) to see that
\begin{align*}
\oint_{p^B}\abs{F_p(\bm{\alpha};X)}^{2s}\,d\bm{\alpha}&\leq q^{sc+H\epsilon} (1+X/q^{c+H})^s\\
&\ll q^{(2s\tau+\epsilon)B} (1+X/q^{B/k})^s
\end{align*}
by our choices of $c$ and $B$. Since $\tau$ is assumed to be sufficiently small in terms of $s$ and $k$, we obtain
\begin{align*}
\oint_{p^B}\abs{F_p(\bm{\alpha};X)}^{2s}\,d\bm{\alpha}&\ll q^{B\epsilon} (1+X/q^{B/k})^s
\end{align*}
for any $\epsilon>0$, and therefore
\begin{align}\label{Fpbound}
\oint_{p^B}\abs{F_p(\bm{\alpha};X)}^{2s}\,d\bm{\alpha}
\ll q^{B\epsilon/(4k)}\ll X^{(k+1)\epsilon/4k}\ll X^{\epsilon},
\end{align}
using our choice of $X$.

We now consider the contribution from $F_0(\bm{\alpha};X)$; namely, the situation in which $W(x,\bm{\phi})\equiv 0\mmod{p}$ for all of our variables $x$. If in fact we have $W(x,\bm{\phi})= 0$, then the number of choices for $x$ is bounded by the degree of ${W}$ (since $W$ is not identically zero) and is therefore $O(1)$. Otherwise, we have $W(x,\bm{\phi})\neq 0$, but $W(x,\bm{\phi})\equiv 0\mmod{p}$. Our assumption that $p$ is sufficiently large in terms of $\bm{\phi}$ ensures that $W$ is not identically zero as a polynomial modulo $p$, and we consequently have $O(1)$ choices for $x$ modulo $p$.  (When $\bm{\phi}$ is the Vinogradov system, we can observe directly that the condition $p>k$ is sufficient to ensure that $W$ does not vanish identically modulo $p$, which confirms the corresponding claim in the introduction to this paper.)

For compactness of notation, all congruences involving the Wronskian in the following expressions are modulo $p$. We have
\begin{align*}
\rho_0 F_0(\bm{\alpha};X) &= \sum_{\substack{x\in\mathcal{E}(X)\\ W(x,\bm{\phi})\equiv 0}} \mathfrak{a}_xe(\psi(x;\bm{\alpha}))
=\sum_{\substack{\xi\mmod{p}\\W(\xi,\bm{\phi})\equiv 0}}\rho_1(\xi)f_1(\bm{\alpha},\xi)
\end{align*}
and may apply H\"older's inequality to see that
\begin{align*}
\abs{F_0(\bm{\alpha};X)}^{2s} 
&\leq \rho_0^{-2s} \bigg(\sum_{\substack{\xi\mmod{p}\\W(\xi,\bm{\phi})\equiv 0}} 1\bigg)^{2s-1} \sum_{\substack{\xi\mmod{p}\\W(\xi,\bm{\phi})\equiv 0}}\rho_1(\xi)^{2s}\abs{f_1(\bm{\alpha},\xi)}^{2s}\\
&\ll \max_{\substack{\xi\mmod{p}\\W(\xi,\bm{\phi})\equiv 0}}\abs{f_1(\bm{\alpha},\xi)}^{2s}.
\end{align*}
Fixing $\xi$ to maximise the expression on the right-hand side, we can write
\begin{align}\label{xisystem}
\oint\abs{F_0(\bm{\alpha};X)}^{2s}\,d\bm{\alpha}\ll \oint \abs{f_1(\bm{\alpha},\xi)}^{2s}\,d\bm{\alpha}.
\end{align}
In other words, we may now assume that we are in the case in which $\bm{x}\equiv\bm{y}\equiv\xi\mmod{p}$. Writing $x_i=pu_i+\xi$ and $y_i=pv_i+\xi$, we see that the integral on the right-hand side of (\ref{xisystem}) counts ellipsephic solutions, with appropriate weights, to the system
\begin{align*}
\sum_{i=1}^s \phi_j(pu_i+\xi)=\sum_{i=1}^s \phi_j(pv_i+\xi), \quad (1\leq j\leq k),
\end{align*}
and by expanding the polynomials $\phi_j$, we may cancel one or more factors of $p$ through the system. We take linear combinations of the resulting equations to obtain a system $\Phi_j\mmod{p}$ of lower degree. If this system has vanishing Wronskian as a polynomial in $\bm{u}$ and $\bm{v}$, we may eliminate further terms until we are no longer in that situation. Consequently, we assume that our new system has non-vanishing Wronskian. We then repeat the procedure of this section by dividing solutions into those for which the Wronskian vanishes modulo $p$, and those for which it does not, and handling the two cases separately.

This process terminates after a number of steps which is $O(1)$, confirming that $\oint \abs{F_0(\bm{\alpha};X)}^{2s}\,d\bm{\alpha}\ll X^{\epsilon}$. Combining this with (\ref{Fpbound}), we obtain 
\begin{align*}
\oint \abs{F(\bm{\alpha};X)}^{2s}\,d\bm{\alpha}\ll X^{\epsilon}
\end{align*}
which yields Theorem \ref{Thm1.1}.

\newcommand{\noop}[1]{}

\end{document}